\documentclass[reqno]{amsart}

\usepackage[T1]{fontenc}
\usepackage{amssymb,mathscinet}
\usepackage{amsthm}
\usepackage{amsmath,mathtools}
\numberwithin{equation}{section}
\usepackage[all]{xy} \SelectTips{eu}{}
\usepackage{xcolor}
\usepackage[hidelinks]{hyperref}
\usepackage{mathrsfs}
\usepackage{xfrac,colonequals,rotating}

\newcommand{\numberseries}{\bfseries} 

\newlength{\thmtopspace} 
\newlength{\thmbotspace} 
\newlength{\thmheadspace} 
\newlength{\thmindent} 

\newtheoremstyle{bfupright head,upright body}
{\thmtopspace}{\thmbotspace}
{\upshape}{\thmindent}{\bfseries}{.}{\thmheadspace} {{\numberseries
    \thmnumber{#2\;}}\thmnote{#3}}

\newtheoremstyle{fixed bf head,slanted body}
{\thmtopspace}{\thmbotspace}{\slshape}
{\thmindent}{\bfseries}{.}{\thmheadspace}
{\thmname{#1}{\numberseries\thmnumber{ #2}}\thmnote{ (#3)}}

\newtheoremstyle{fixed bf head,upright body}
{\thmtopspace}{\thmbotspace}{\upshape}
{\thmindent}{\bfseries}{.}{\thmheadspace}
{\thmname{#1}{\numberseries\thmnumber{ #2}}\thmnote{ (#3)}}

\theoremstyle{bfupright head,upright body}

\theoremstyle{fixed bf head,slanted body}
 \newtheorem*{thm*}{Theorem}
 \newtheorem*{prp*}{Proposition}

 \newtheorem*{cor*}{Corollary}
 \newtheorem*{lem*}{Lemma}

\theoremstyle{fixed bf head,upright body}
 \newtheorem*{dfn*}{Definition}
 \newtheorem*{rmk*}{Remark}
 \newtheorem*{exa*}{Example}
 \newtheorem*{fct*}{Fact}

\newlength{\thmlistleft} 
\newlength{\thmlistright} 
\newlength{\thmlistpartopsep} 
\newlength{\thmlisttopsep} 
\newlength{\thmlistparsep} 
\newlength{\thmlistitemsep} 

\newcounter{eqc} %
  {\end{list}}%

\newcounter{prt} %
  {\end{list}}%

\newcounter{rqm} %
  {\end{list}}%

  {\end{list}}%

\newenvironment{prf*}[1][Proof]{%
  \begin{proof}[\bf #1]} {\end{proof} }

\theoremstyle{fixed bf head,slanted body}
\newtheorem{theorem}{Theorem}[section]
\newtheorem{lemma}[theorem]{Lemma}
\newtheorem{proposition}[theorem]{Proposition}
\newtheorem{corollary}[theorem]{Corollary}
\theoremstyle{fixed bf head,upright body}

\theoremstyle{fixed bf head,upright body}
\newtheorem{remark}[theorem]{Remark}
\theoremstyle{plain}
\newtheorem{maintheorem}[theorem]{Theorem}
\newtheorem{maincorollary}[theorem]{Corollary}

\newcommand{\GF}{\mathcal{GF}}
\newcommand{\GI}{\mathcal{GI}}
\newcommand{\PGF}{\mathcal{PGF}}
\newcommand{\Hom}{\operatorname{Hom}}
\newcommand{\Ext}{\operatorname{Ext}}
\newcommand{\Tor}{\operatorname{Tor}}
\newcommand{\Gfd}{\operatorname{Gfd}}
\newcommand{\Gid}{\operatorname{Gid}}
\newcommand{\im}{\operatorname{im}}
\newcommand{\op}{\mathrm{op}}
\newcommand{\D}{D}
\allowdisplaybreaks[1]
\title[Direct products of Gorenstein flat modules]{Direct products of Gorenstein flat modules over coherent rings}

\author[G.C.~Dai]{Guocheng Dai} \address{G.C.~Dai. \ School of Mathematics Sciences,
  Sichuan Normal University, Chengdu 610066, P.R. China} \email{ggcdai@163.com} \urladdr{}

\author[J.S.~Hu]{Jiangsheng Hu} \address{J.S.~Hu. \ School of Mathematics,
  Hangzhou Normal University, Hangzhou 311121, P.R. China} \email{hujs@hznu.edu.cn} \urladdr{}
\thanks{G.C. Dai was supported by the NSF of China
(Grant No.~ 11901412). J.S. Hu was supported by the NSF of China
(Grant No.~12571035).}

\date{}
\subjclass[2020]{16E65, 16D40, 03C60}
\keywords{Gorenstein flat module, coherent ring, relative Mittag--Leffler module,
Ding injective module, preenvelope, abstract elementary class}

\begin{document}

\begin{abstract}
We construct a commutative coherent local
algebra of Krull dimension zero and a countably generated projectively
coresolved Gorenstein flat module whose countable power has infinite
Gorenstein flat dimension.
This answers negatively the question of whether Gorenstein flat modules
over a right coherent ring are closed under direct products.
As a consequence, we obtain a pure-injective Ding injective module
whose character module has infinite Gorenstein flat dimension and whose
character bidual has infinite Gorenstein injective dimension.
We also give equivalent conditions for Gorenstein flat modules over a
right coherent ring to be closed under direct products, in terms of relative
Mittag--Leffler conditions, stationarity, and Gorenstein flat and injective
dimensions of character biduals.
For the constructed ring, we exhibit a decreasing countable sequence
of strong submodules of a free module whose intersection has infinite
Gorenstein flat dimension.
Thus the abstract elementary class of Gorenstein flat modules,
with the strong submodule relation, has finite character but does not
admit intersections.
\end{abstract}

\maketitle

\section{Introduction}\label{sec:introduction}\label{sec:scope}

Throughout, $R$ denotes an associative ring with identity, and all
modules are unital. We write $R^{\op}$ for the opposite ring of $R$
and identify right $R$-modules with $R^{\op}$-modules.
Unless stated otherwise, Gorenstein flat modules are $R$-modules and
Gorenstein injective modules are $R^{\op}$-modules.
Chase proved that $R$ is right coherent if and only if direct products of
flat $R$-modules are flat (see \cite[Theorem 2.1]{Chase}).
Gorenstein flat modules were introduced by Enochs, Jenda and Torrecillas
(see \cite{EJT}). We denote their class by $\GF(R)$.
In order to establish closure of Gorenstein flat modules under
extensions and the existence of Gorenstein flat covers,
\v{S}aroch and \v{S}\v{t}ov\'i\v{c}ek introduced the class $\PGF(R)$
of projectively coresolved Gorenstein flat (PGF) modules
(see \cite[Section 4]{SS}).
By definition, $\PGF(R)\subseteq\GF(R)$.
The definitions of Gorenstein flat and PGF modules are recalled in
Subsection~\ref{sec:gorenstein-modules}.

\v{S}aroch and \v{S}\v{t}ov\'i\v{c}ek proved that
closure of Gorenstein flat $R$-modules under direct products implies right
coherence of $R$. They asked whether the converse holds in the discussion
preceding \cite[Proposition 4.13]{SS}.
Closure under direct products and definability are also discussed in
\cite[Paragraph 2.20]{Bird}.

This converse question is part of the classification problem posed by
Christensen, Foxby and Holm (see \cite[Problem 4.12]{Survey}).
For commutative noetherian rings, closure under direct products holds in the presence
of a dualizing complex (see \cite[Theorem 5.7]{CFH}). It also holds when
every localization at a nonmaximal prime is Gorenstein
(see \cite[Proposition 6.21]{MS}). These sufficient conditions are recorded
in \cite[Theorem 4.13]{Survey}.
Closure under direct products is also related to the existence of Gorenstein flat
preenvelopes, as explained in \cite{ELR,HJ,Iacob2026}.

We give a negative answer to the converse question of
\v{S}aroch and \v{S}\v{t}ov\'i\v{c}ek, even for a commutative
local ring of Krull dimension zero.
Let $k$ be a field, and let
\begin{equation}\label{eq:main-ring}
 R=k[x_j,y_j\mid j\ge0]/
       (x_j^2,x_jy_j,y_j^2\mid j\ge0).
\end{equation}
For a module $M$ over a $k$-algebra, put
$\D(M)=\Hom_k(M,k)$.
For an $R$-module $M$ and an $R^{\op}$-module $J$, we write $\Gfd_RM$
and $\Gid_{R^{\op}}J$ for their Gorenstein flat and Gorenstein
injective dimensions, respectively.

\begin{maintheorem}\label{thm:main-counterexample}
The ring $R$ in \eqref{eq:main-ring} is commutative, coherent,
local, nonnoetherian, and of Krull dimension zero.
There is a countably generated PGF $R$-module $B$ such that, for the injective
$R^{\op}$-module $I=\D(R)$, one has
$\Tor_n^R(I,B^{\mathbb N})\ne0$ for every $n\ge1$.
Consequently, $\Gfd_R B^{\mathbb N}=\infty$.
There are a countably generated free $R$-module $Q$ and an
endomorphism $\delta:Q\to Q$ such that
$B=\ker\delta=\im\delta$, $\delta^2=0$, and the periodic complex
$(Q,\delta)$, with $Q$ in every degree and differential $\delta$,
remains exact after tensoring with every injective $R^{\op}$-module.
\end{maintheorem}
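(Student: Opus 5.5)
The plan has three parts: the ring-theoretic properties of $R$, the construction of $(Q,\delta)$ and $B$, and the nonvanishing of $\Tor$ against $B^{\mathbb N}$, which is the real content.

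\emph{Properties of $R$.} Write $A_j=k[x_j,y_j]/(x_j,y_j)^2$, a three-dimensional local $k$-algebra, and $R_n=A_0\otimes_k\cdots\otimes_k A_{n-1}$. Then $R=\varinjlim R_n=\bigotimes_{j\ge0}A_j$, and the following hold.
\begin{itemize}
\item Each $R_n$ is a finite-dimensional local algebra with nilpotent maximal ideal.
\item Each transition map $R_n\to R_{n+1}=R_n\otimes_k A_n$ is free, and $R$ is free over every $R_n$.
\end{itemize}
Several properties follow directly. Every element of $\mathfrak m=(x_j,y_j\mid j\ge0)$ lies in some $\mathfrak m_{R_n}$, so it is nilpotent. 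Hence $R$ is local with $\operatorname{Spec}R=\{\mathfrak m\}$, so its Krull dimension is zero. The ideal $\mathfrak m$ is not finitely generated, since $\mathfrak m/\mathfrak m^2$ has the infinite basis $\{x_j,y_j\}$, so $R$ is nonnoetherian. For coherence, I use the standard fact that a directed union of noetherian rings with flat transition maps is coherent. Concretely, a finite presentation $R^a\to R^b$ is defined over some $R_n$, and its kernel is the flat base change of a finitely generated kernel over the noetherian ring $R_n$.

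\emph{Construction of $B$.} I would take $Q=R^{(\mathbb N)}$ and define $\delta$ by a banded rule that involves only finitely many variables on each basis vector. Two requirements guide the choice.
\begin{itemize}
\item Over each $R_n$, a truncation of $\delta$ should give an honest exact periodic complex of free $R_n$-modules, up to a boundary term.
\item That boundary term should be pushed off to higher indices, and hence to higher tensor factors, as $n$ grows.
\end{itemize}
Exactness of $(Q,\delta)$ and the identities $\ker\delta=\im\delta$ and $\delta^2=0$ would then be checked degree by degree: a given cycle involves only finitely many coordinates and variables, so it lies in a truncation where the local exactness holds. Set $B=\ker\delta$. For an injective $R$-module $E$ (here $R^{\op}=R$), the complex $E\otimes_R Q$ is a direct sum of copies of $E$. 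Its exactness is equivalent, by applying $\D$ and using that $\D(E)$-type modules detect it, to exactness of $\Hom_R(Q,R)=R^{\mathbb N}$ with the transpose differential. Because each coordinate of the transpose is again a finite expression, the same truncation argument applies, and this gives the PGF property.

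\emph{The obstruction for $B^{\mathbb N}$.} Since $R$ is coherent, $Q^{\mathbb N}$ is flat, and $(Q^{\mathbb N},\delta^{\mathbb N})$ is exact because products are exact. So $\cdots\to Q^{\mathbb N}\to Q^{\mathbb N}\to B^{\mathbb N}\to0$ is a flat resolution, and by periodicity $\Tor_n^R(I,B^{\mathbb N})\cong H(I\otimes_R Q^{\mathbb N})$, independently of $n\ge1$. By duality, $\D(I\otimes_R Q^{\mathbb N})\cong\Hom_R(Q^{\mathbb N},R)$. It therefore suffices to exhibit a cocycle in $\Hom_R(Q^{\mathbb N},R)$ that is not a coboundary. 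Such a cocycle should be an $R$-linear functional on $Q^{\mathbb N}$ that does not factor through any finite product and is built diagonally: on the $i$-th factor it probes the $i$-th tensor factor $A_i$. The point is that, over a single $A_i$, the truncated complex fails to be exact in a controlled way, since $A_i$ is not Gorenstein. On the direct sum these local failures are compensated by the tail, but on the product the functional collects all of them simultaneously, so no finite correction kills it.

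\emph{Main obstacle.} The hard step is choosing $\delta$ so that both of the following hold at once:
\begin{itemize}
\item the complex is totally acyclic with respect to injectives;
\item the diagonal obstruction in $\Hom_R(Q^{\mathbb N},R)$ survives.
\end{itemize}
I would first try $\delta$ supported on consecutive pairs $(e_j,e_{j+1})$ with coefficients in $A_j\otimes A_{j+1}$, and verify nonvanishing by passing to the graded pieces with respect to the multigrading by the $A_j$. Finally, $\Gfd_RB^{\mathbb N}=\infty$ follows: Gorenstein flat dimension at most $d$ forces $\Tor_{>d}(I,-)=0$ for injective $I$, and this contradicts the nonvanishing just established.
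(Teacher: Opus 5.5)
There is a genuine gap: the proposal never constructs $\delta$, and that construction, together with the non-boundary argument, is the whole content of the theorem. Your ring-theoretic part and the periodic flat resolution of $B^{\mathbb N}$ agree with the paper. But you yourself flag the choice of $\delta$ as the main obstacle. You only suggest trying a banded map on consecutive pairs $A_j\otimes A_{j+1}$, and nothing shows such a map is exact, acyclic after tensoring with injectives, or gives nonzero Tor on the product.

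The paper's construction is specific. Over $A_0$ it uses $d(e_{2n})=x_0e_n$, $d(e_{2n+1})=y_0e_n$, which has a $k$-linear contraction. Over $A_j$ it uses $t_j(f_n)=x_jf_{2n}+y_jf_{2n+1}$, which becomes contractible after $\D(A_j)\otimes_{A_j}-$. The block matrix $\Delta_j$ then gives one contraction for exactness and the other for exactness after $\D(\Lambda_j)\otimes-$. Scalars are extended along the free map $\Lambda_j=A_0\otimes A_j\to R$, and $Q=\bigoplus_jQ_j$. The shared factor $A_0$ is essential: one fixed $i\in\D(R)$, extracting the $y_0$-coefficient, serves every coordinate. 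Meanwhile $q_j=x_j(E_{0,0},0)$ and $ix_j=0$ make every coordinate of $z=i\otimes(q_j)_j$ vanish.

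Second, your duality reductions are wrong for this ring. Since $R$ is infinite-dimensional over $k$, adjunction gives $\D(\D(R)\otimes_RQ)\cong\Hom_R(Q,\D\D(R))$, and $\D\D(R)\ne R$. So neither exactness of $\D(R)\otimes_RQ$ nor nonvanishing of $H(I\otimes_RQ^{\mathbb N})$ reduces to a statement about $\Hom_R(Q,R)$ or $\Hom_R(Q^{\mathbb N},R)$. Exactness of $\Hom_R(Q,R)$ is a Gorenstein-projective-type condition, and you give no argument linking it to the PGF condition. Your truncation argument for $\Hom_R(Q,R)=R^{\mathbb N}$ would also need to solve an infinite linear system from solvable finite subsystems. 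That requires algebraic compactness, which $R$ lacks.

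The paper avoids duals of $R$ entirely. It checks acyclicity over the finite-dimensional $\Lambda_j$, where every injective is a summand of a direct sum of copies of $\D(\Lambda_j)$. It transfers this to $R$ because injective $R$-modules restrict to injective $\Lambda_j$-modules. For nonvanishing, the missing idea is descent. A boundary equation for $z$ in $I\otimes_R\prod_jQ_j$ uses only finitely many balancing relations, so it already holds in $I\otimes_{R_N}\prod_jQ_j$ for some $N$. There, for $j>N$, the scalar $x_j$ can no longer be moved across the tensor sign, and $\ell_j\circ r_{N,j}$ detects $z_N$ as a non-boundary. Your ``diagonal functional'' heuristic does not supply this mechanism.
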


Write $R_N=\bigotimes_{j=0}^{N}(k[x_j,y_j]/(x_j,y_j)^2)$,
where the tensor products are over $k$.
Then $R=\bigcup_{N\ge0}R_N$.
Gorenstein flat $R_N$-modules are closed under direct products
for every $N$ by Corollary~\ref{cor:finite-rings-union},
whereas Gorenstein flat $R$-modules are not.
Since $R$ is nonnoetherian, this example does not settle
whether Gorenstein flat modules over arbitrary commutative
noetherian rings are closed under direct products.

We obtain two commuting differentials from the proof of
\cite[Theorem 5.7]{KMOT} by taking local algebras with
two-dimensional square-zero radicals. We combine them using
the block matrix in \cite[Proposition 5.6]{KMOT}.
For each $j\ge1$, we use the factors at indices $0$ and $j$,
verify exactness after tensoring with injective modules
in Lemma~\ref{lem:finite-complete}, and extend scalars to $R$.

Taking direct products of the resulting complexes in each degree
gives an exact complex whose terms are flat by Chase's theorem
(see \cite[Theorem 2.1]{Chase}).
After tensoring with $I=\D(R)$, we construct a cycle.
For each $N$, projection to a coordinate $j>N$, followed by
the restriction map in \eqref{eq:finite-restriction} and the
functional in \eqref{eq:restricted-detection}, shows that
the corresponding cycle over $R_N$ is not a boundary.
A boundary equation over $R$ would use only finitely many
balancing relations and would therefore already hold over
some $R_N$. The resulting contradiction and the periodic
flat resolution give nonzero Tor in every positive degree.
The passage to $B^{\mathbb N}$ uses a direct summand, as
shown at the end of the proof of
Theorem~\ref{thm:main-counterexample}
in Subsection~\ref{sec:periodic-complex}.

We also obtain a counterexample in character duality from
Theorem~\ref{thm:main-counterexample}. For a module $M$, we write
$M^+=\Hom_{\mathbb Z}(M,\mathbb Q/\mathbb Z)$ for its character module.
Ding injectivity strengthens Gorenstein injectivity by testing exactness
against all FP-injective modules. We write $\GI(R^{\op})$ for the class
of Gorenstein injective $R^{\op}$-modules. The definitions are recalled
in Subsection~\ref{sec:gorenstein-modules}.

\begin{maincorollary}\label{thm:main-duality}
Let $R$ be the ring and $B$ the module in
Theorem~\ref{thm:main-counterexample}. Put
$E=B^+$. Then $E$ is pure-injective and Ding injective, but
$\Gfd_R E^+=\infty$ and $\Gid_{R^{\op}}E^{++}=\infty$.
\end{maincorollary}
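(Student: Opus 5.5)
Here $E=B^+$, so $E^+=B^{++}$ and $E^{++}=B^{+++}$.

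\textbf{Pure-injectivity and Ding injectivity of $E$.} Every character module is pure-injective, so $E=B^+$ is pure-injective. For Ding injectivity we use the periodic complex $(Q,\delta)$ of Theorem~\ref{thm:main-counterexample}. Applying $(-)^+$ gives the complex $(Q^+,\delta^+)$. Its terms $Q^+$ are injective $R^{\op}$-modules, since $Q$ is free (hence flat). The complex is exact because $(-)^+$ is exact, and it has $E=B^+$ as a cycle module.

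It remains to check that $\Hom_{R^{\op}}(F,-)$ keeps it exact for every FP-injective $F$. By adjunction,
\[
\Hom_{R^{\op}}(F,(Q,\delta)^+)\cong\bigl(F\otimes_R(Q,\delta)\bigr)^+ ,
\]
so it suffices that $F\otimes_R(Q,\delta)$ is exact. Since $R$ is coherent, $F$ is FP-injective exactly when $F^{++}$ is injective, and $F$ is a pure submodule of $F^{++}$. Tensoring with a pure monomorphism shows $F\otimes_R(Q,\delta)$ is a pure subcomplex of $F^{++}\otimes_R(Q,\delta)$, and the latter is exact by Theorem~\ref{thm:main-counterexample}. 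Homology of a pure subcomplex injects: the character dual of the pure exact sequence $0\to F\otimes Q\to F^{++}\otimes Q\to C\to0$ of complexes splits degreewise. So it is cleanest to argue on the level of $(-)^+$: $(F\otimes Q)^+$ is a degreewise direct summand of $(F^{++}\otimes Q)^+$ compatibly with the differentials, and it is exact whenever the latter is. Alternatively, one can use that PGF modules over a coherent ring have Ding injective character modules, if the paper records this in Subsection~\ref{sec:gorenstein-modules}. Either way, $E$ is Ding injective.

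\textbf{$\Gfd_R E^+=\infty$.} The plan is to show $B^{\mathbb N}$ is a pure submodule, indeed a direct summand up to pure embedding, of something controlled by $B^{++}$. Concretely, $(B^{++})^{\mathbb N}\cong\bigl(\bigoplus_{\mathbb N}B^+\bigr)^+ = \bigl((B^{(\mathbb N)})^{+}\bigr)^{+}$, and $B^{\mathbb N}$ embeds purely into $(B^{\mathbb N})^{++}$. The more direct route detects Tor. If $\Gfd_R B^{++}=n<\infty$, then over the coherent ring $R$ the class of modules of Gorenstein flat dimension at most $n$ is closed under pure submodules and pure quotients, and also under direct products of the form needed. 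Then $\Tor^R_{i}(I,-)$ vanishes on it for $i>n$ and injective $I$.

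Since $B$ is pure in $B^{++}$, we have $B^{\mathbb N}$ pure in $(B^{++})^{\mathbb N}=(B^{(\mathbb N)})^{+++}$. The character modules $(B^{(\mathbb N)})^{+}$ and their characters are controlled by $E$, because $(B^{(\mathbb N)})^+=E^{\mathbb N}$. The cleanest formulation is to reduce to the equivalent conditions, stated later in the paper, tying closure under products to $\Gfd$ of character biduals. I expect this reduction to be the main obstacle. Closure of finite Gorenstein flat dimension under products is exactly what fails, so one must avoid assuming it and instead argue directly with Tor: use $\Tor_i(I,M^{++})\cong$ a dual of $\Ext$ computed over pure-injective modules, and $\Tor_i(I,B^{\mathbb N})\hookrightarrow$ a corresponding group for $B^{++}$-products. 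The specific choice $I=\D(R)$ is finitely presented-like enough, via the Ext--Tor duality for coherent rings, to commute with the products.

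\textbf{$\Gid_{R^{\op}}E^{++}=\infty$.} This follows by duality. If $\Gid E^{++}=m<\infty$, then $E^{+++}$, which contains $E^+$ as a pure submodule and is its character, has $\Gfd\le m$, using $\Gid(M^+)$ versus $\Gfd M$ over a coherent ring. Then $\Gfd E^+\le m$, since finite Gorenstein flat dimension passes to pure submodules. This contradicts the previous part.
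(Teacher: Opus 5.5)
\paragraph{The main gap.} The substance of the corollary is $\Gfd_RE^+=\infty$, and you do not actually prove it; the route you outline stalls exactly where you suspect. Placing $B^{\mathbb N}$ purely inside $(B^{++})^{\mathbb N}$ only helps if finite Gorenstein flat dimension passes from $B^{++}$ to its countable power. That is precisely what is unavailable here.

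Your fallback does not work either. You claim that $I=\D(R)$ is ``finitely presented-like enough'' for $\Tor$ to commute with products. This is false: Remark~\ref{rmk:tensor-comparison} shows that $I\otimes_R\prod_jQ_j\to\prod_j(I\otimes_RQ_j)$ is not even injective, and the counterexample rests on exactly this. Your identifications are also off. Since $(B^{(\mathbb N)})^+\cong\prod_{\mathbb N}B^+=E^{\mathbb N}$, and not $\bigoplus_{\mathbb N}B^+$, the equation $(B^{++})^{\mathbb N}=(B^{(\mathbb N)})^{+++}$ is unjustified.

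The missing idea is to work with $P=\prod_{j\ge1}B_j$ instead of $B^{\mathbb N}$, using that $B$ was built as $\bigoplus_{j\ge1}B_j$.
\begin{itemize}
\item The proof of Theorem~\ref{thm:main-counterexample} establishes $\Tor_n^R(I,P)\ne0$ for all $n\ge1$ before it passes to $B^{\mathbb N}$.
\item Lemma~\ref{lem:product-into-bidual} gives a pure monomorphism $P\to(\bigoplus_jB_j)^{++}=B^{++}=E^+$.
\item Pure monomorphisms induce monomorphisms on $\Tor_n^R(I,-)$, so $\Tor_n^R(I,E^+)\ne0$ for every $n\ge1$.
\item Since $I$ is injective, this gives $\Gfd_RE^+=\infty$.
\end{itemize}
No closure under products is involved.

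\paragraph{The other two parts.} Your deduction of $\Gid_{R^{\op}}E^{++}=\infty$ runs in the wrong direction. From $\Gid_{R^{\op}}E^{++}\le m$ you infer $\Gfd_RE^{+++}\le m$. That assumes a bound on $\Gid_{R^{\op}}J$ bounds $\Gfd_RJ^+$. Over this ring that is false, and $J=E$ is a counterexample: $\Gid_{R^{\op}}E=0$ while $\Gfd_RE^+=\infty$, which is the point of the corollary. The valid identity is $\Gfd_RM=\Gid_{R^{\op}}M^+$ for $R$-modules $M$ (Lemma~\ref{lem:detecting-dimensions}). With $M=E^+$ it gives $\Gid_{R^{\op}}E^{++}=\Gfd_RE^+=\infty$ directly.

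For Ding injectivity, your fallback is exactly what the paper does: over a coherent ring, character modules of Gorenstein flat modules are Ding injective (Subsection~\ref{sec:character-modules}). Your direct argument reaches the right conclusion, but the principle you state is false in general. Homology of a degreewise pure subcomplex need not inject: a complex $X$ embeds degreewise split into the contractible cone of $1_X$.

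What saves your case is the following. Let $\iota_F\colon F\to F^{++}$ be the canonical map. The splitting of $\iota_F^+\colon F^{+++}\to F^+$ is a module map in the coefficient variable, so it induces a splitting of the complexes $\Hom_R((Q,\delta),-)$. More simply, $\Tor_1^R(F,B)$ embeds into $\Tor_1^R(F^{++},B)=0$. Hence $0\to B\to Q\to B\to0$ stays exact under $F\otimes_R-$, and splicing gives exactness of $F\otimes_R(Q,\delta)$.
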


Thus coherence does not imply that the character module of a Ding
injective module is Gorenstein flat. The hypothesis that the character
module of every Ding injective module is Gorenstein flat is used in
\cite[Theorem 5]{Iacob2026}.

We next give equivalent conditions for Gorenstein flat modules
over a right coherent ring to be closed under direct products.
The Tor description of Gorenstein flat modules reduces this
question to a Tor vanishing condition on direct products
(see \cite{SS,Holm}).
Using a free presentation of the $R^{\op}$-module in this
description, we express this vanishing condition as a relative
Mittag--Leffler condition on its first syzygy
(see \cite{CI,AHH}).

Let $R$ be a right coherent ring
and put $\rho=|R|+\aleph_0$.
There is an $R^{\op}$-module $T$
such that $\GI(R^{\op})=T^\perp$ and
$\GF(R)=\{M:\Tor_1^R(T,M)=0\}$, by
\cite[Theorem 5.6 and Corollary 5.7]{SS}.
Here $T^\perp$ means vanishing of $\Ext^1_{R^{\op}}(T,-)$.
Let $L$ be the first syzygy in a fixed free presentation of $T$.
An $R^{\op}$-module $L$ is called \emph{$\GF(R)$-Mittag--Leffler} if the natural map
$L\otimes_R\prod_iG_i\to\prod_i(L\otimes_RG_i)$ is injective for
every family in $\GF(R)$ (see \cite[Definition 1.1]{AHH}).
Choose a set $\mathcal S$ of representatives of the isomorphism
classes of modules in $\GF(R)$ whose underlying sets have
cardinality at most $\rho$. Put
$C_U=\bigoplus_{H\in\mathcal S}H^{(\rho)}$.
The module $E_U=C_U^+$ is pure-injective and Gorenstein injective.
Write $L=\bigcup_\lambda L_\lambda$ as the directed union of its
finitely generated submodules.
The module $L$ is called \emph{$E_U$-stationary} if, for each $\lambda$, the images
of $\Hom_{R^{\op}}(L_\mu,E_U)\to\Hom_{R^{\op}}(L_\lambda,E_U)$
are constant for all sufficiently large $\mu$
(see \cite[Definition 3.7]{AHH}).
Definable classes and Gorenstein flat preenvelopes are recalled in
Subsection~\ref{sec:character-modules}.

\begin{maintheorem}\label{thm:criteria}
Let $R$ be a right coherent ring. Fix $T$, its free presentation,
and $\mathcal S$ as above. The following conditions are equivalent.
\begin{enumerate}
\item The class $\GF(R)$ is closed under arbitrary direct products.
\item Every direct product of at most $\rho$ Gorenstein flat modules has
finite Gorenstein flat dimension.
\item The module $L$ is $\GF(R)$-Mittag--Leffler.
\item The module $L$ is $E_U$-stationary.
\item The module $C_U^{++}$ is Gorenstein flat.
\item The module $E_U^{++}$ has finite Gorenstein injective dimension.
\item The class $\GF(R)$ is definable.
\item Every $R$-module has a Gorenstein flat preenvelope.
\end{enumerate}
\end{maintheorem}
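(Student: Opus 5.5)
We organize the proof as a cycle of implications around condition (1), with side loops for (5)--(8). Throughout we use the description $\GF(R)=\{M:\Tor_1^R(T,M)=0\}$ and the free presentation $0\to L\to F\to T\to 0$ with $F$ free. Because $R$ is right coherent, products of flat $R$-modules are flat by Chase's theorem. Hence $\Tor_{\ge1}$ against $F$ vanishes on products. For any family $(G_i)$ in $\GF(R)$ we then obtain the commutative diagram with exact rows
\[
\xymatrix{
0\ar[r] & \Tor_1^R(T,\prod_iG_i)\ar[r] & L\otimes_R\prod_iG_i\ar[r]\ar[d] & F\otimes_R\prod_iG_i\ar[d]^{\cong?}\\
0\ar[r] & \prod_i\Tor_1^R(T,G_i)=0\ar[r] & \prod_i(L\otimes_RG_i)\ar[r] & \prod_i(F\otimes_RG_i).
}
\]
The right vertical map need not be an isomorphism when $F$ is not finitely generated. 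It is, however, injective: $F\otimes_R-$ of a product embeds into the product, since $F$ is flat and hence Mittag--Leffler. We will therefore chase the diagram to identify $\Tor_1^R(T,\prod G_i)$ with the kernel of $L\otimes\prod G_i\to\prod(L\otimes G_i)$. This gives (1)$\Leftrightarrow$(3) directly. The equivalence (3)$\Leftrightarrow$(4) is then the relative Mittag--Leffler/stationarity theorem of Angeleri H\"ugel--Herbera (\cite{AHH}), applied to the class $\GF(R)$. That theorem requires a single pure-injective test module cogenerating the relevant dual class. Here $E_U=C_U^+$ serves, once we show that every Gorenstein flat module is a directed union, or pure-filtered limit, of pure submodules of size $\le\rho$ that are Gorenstein flat. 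This follows from deconstructibility and $\rho$-closure of $\GF(R)$ (\cite{SS}). We expect this reduction to the single module $E_U$ to be the main technical obstacle, and we will handle it through the $\aleph_1$/$\rho$-pure submodule arguments used in \cite{SS}.

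For (1)$\Rightarrow$(2) there is nothing to prove. For (2)$\Rightarrow$(1), let $P=\prod_{i}G_i$ be a product of at most $\rho$ modules, first restricting to families of size $\le\rho$. By hypothesis $\Gfd P=n<\infty$. We will take $P$ itself and note that $P\cong P\times P^{(\ldots)}$-type tricks are unavailable, so we argue instead with syzygies. Each $G_i$ admits an exact sequence $0\to G_i\to F_i\to G_i'\to0$ with $F_i$ flat and $G_i'$ Gorenstein flat. Iterating $n$ times and taking products, which is exact, writes $P$ as an $n$-th cosyzygy of Gorenstein flat-dimension-finite modules with flat middle terms (flat by Chase). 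Dimension shifting then gives $\Tor_1(T,P)\cong\Tor_{n+1}(T,P^{(n)})=0$, where $P^{(n)}$ is the product of the $n$-th cosyzygies and has finite Gfd by (2). Finally, the Mittag--Leffler criterion (3) only needs products of families in $\mathcal S$, indexed by sets of size $\le\rho$, because $L$ is $\rho$-generated. This upgrades the statement to arbitrary products.

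For (1)$\Leftrightarrow$(5)$\Leftrightarrow$(6) we use that $C_U^{++}$ is a pure submodule, indeed a direct summand up to purity, of an ultrapower of $C_U$, and hence of a product of copies of $C_U$. Under (1) the product is Gorenstein flat, and pure submodules of Gorenstein flat modules are Gorenstein flat under coherence (\cite{SS}); this gives (5). Conversely, (5) implies $E_U^{+++}\cong (C_U^{++})^{+}$ is Gorenstein injective, so $E_U^{++}$, which is a summand of $E_U^{++++}$-type duals, has finite Gid; this gives (6). For (6)$\Rightarrow$(4), note that finite Gid of the pure-injective hull $E_U^{++}\supseteq E_U$ forces $\Ext^1(T,-)$ vanishing on the relevant products of copies of $E_U$. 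Via the Hom-form of stationarity in \cite{AHH}, this yields $E_U$-stationarity. Lastly, (1)$\Leftrightarrow$(7) holds because $\GF(R)$ is already closed under direct limits and pure submodules, and definability means exactly the additional closure under products. The equivalence (7)$\Leftrightarrow$(8) is the standard fact that a class closed under direct limits and containing the flats has preenvelopes if and only if it is closed under products (Crawley-Boevey/Rada--Saor\'in), as recalled in Subsection~\ref{sec:character-modules}.
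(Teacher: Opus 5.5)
Your (1)$\Leftrightarrow$(3) (the diagram chase, using that $F\otimes_R\prod_iG_i\to\prod_i(F\otimes_RG_i)$ is injective) and your (1)$\Leftrightarrow$(7)$\Leftrightarrow$(8) agree with the paper. So does your outline of (3)$\Leftrightarrow$(4) through small pure Gorenstein flat submodules; deconstructibility is not needed there, because pure submodules of Gorenstein flat modules are Gorenstein flat by the Tor description.

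\textbf{(2)$\Rightarrow$(1).} This step has a gap. From $\Gfd_RP^{(n)}<\infty$ you cannot conclude $\Tor_{n+1}^R(T,P^{(n)})=0$. That needs $\Gfd_RP^{(n)}\le n$, and $n=\Gfd_RP$ gives no such bound; shifting only yields $\Gfd_RP^{(n)}\le 2n$. What is missing is a uniform bound, and it comes from exactly the kind of product trick you set aside. If $P_k$ are products of at most $\rho$ Gorenstein flat modules with $\Gfd_RP_k>k$, then $\prod_kP_k$ is again such a product. So it has finite dimension $d$, and each $P_k$ is a direct summand of it, a contradiction. Now suppose the least uniform bound $d$ were positive. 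One shift through $0\to P\to\prod_iF_i\to P'\to0$ gives $\Tor_d^R(T,P)\cong\Tor_{d+1}^R(T,P')=0$, so $d=0$; this is Proposition~\ref{prop:finite-product-dimensions}.

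Your passage from at most $\rho$ factors to arbitrary products is also unsupported. Nothing makes $L$ $\rho$-generated, since $T$ is only known to be a direct sum of some generating set. The paper uses Theorem~\ref{thm:coordinate} instead. Being a boundary is governed by at most $\rho$ finite matrix systems, and each is solvable in a product exactly when it is solvable in every coordinate. Hence a non-boundary survives projection to at most $\rho$ coordinates.

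\textbf{(5) and (6).} The implications out of (5) and (6) are not established. In your step (6)$\Rightarrow$(4), every product of copies of $E_U$ is $(C_U^{(J)})^+$, which is Gorenstein injective anyway. So $\Ext^1_{R^{\op}}(T,-)$-vanishing there carries no information, and nothing links $\Gid_{R^{\op}}E_U^{++}<\infty$ to stabilization of the restriction images.

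The missing idea is the pure monomorphism $\prod_iM_i\to(\bigoplus_iM_i)^{++}$ of Lemma~\ref{lem:product-into-bidual}. Since $E_U^{++}=(C_U^{++})^+$, condition (6) says $\Gfd_RC_U^{++}=d<\infty$. Suppose some $\Tor_{d+1}^R(T,\prod_iG_i)$ is nonzero. By Theorem~\ref{thm:coordinate} and pure submodules of cardinality at most $\rho$ containing the finitely many coefficients, you can move it to a product $\prod_{i\in X}H_i$ with $|X|\le\rho$ and each $H_i$ isomorphic to a member of $\mathcal S$. This product embeds purely into $(\bigoplus_{i\in X}H_i)^{++}$, a direct summand of $C_U^{++}$. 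So its Gorenstein flat dimension is at most $d$, a contradiction. Hence all products have dimension at most $d$, and the uniform-bound argument above gives (1).

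\textbf{Two smaller slips.} In (1)$\Rightarrow$(5), a bidual $M^{++}$ need not embed purely in a power of $M$. For $M=\mathbb Z$, $M^{++}\cong\widehat{\mathbb Z}$ is not $\aleph_1$-free, so it is not a subgroup of any $\mathbb Z^I$. The ultrapower version does work, since an ultrapower is a pure quotient of a product. Alternatively, use closure of definable classes under character biduals, as the paper does. In (5)$\Rightarrow$(6), note that $E_U^{++}=(C_U^{++})^+$, not $E_U^{+++}$, so (5) gives $\Gid_{R^{\op}}E_U^{++}=0$ directly.
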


A few remarks on Theorem~\ref{thm:criteria} are in order.
First, the condition~(2) requires finite Gorenstein flat dimension for each
direct product, with no uniform bound assumed.
The argument giving a uniform bound in \cite[Proposition 2.4]{CI}
is combined here with detection of nonzero Tor on at most $\rho$ coordinates and
dimension shifting to reduce the bound to zero.
If Gorenstein flat modules are not closed under direct products,
then some direct product of at most $\rho$ Gorenstein flat modules
has infinite Gorenstein flat dimension, by the equivalence of
conditions~(1) and~(2). No large-cardinal hypothesis is needed.
This failure is realized by a
countable power of a countably generated PGF module in
Theorem~\ref{thm:main-counterexample}.

Second, the general relative Mittag--Leffler criterion for hereditary
Tor-pairs is given in \cite[Theorem 2.1 and Corollary 2.2]{CI}; its
connection with strict stationarity is given in
\cite[Proposition 8.13(1)]{AHH}.
Here conditions~(3) and~(4) concern the fixed syzygy $L$.
For the ring in Theorem~\ref{thm:main-counterexample}, $L$ is not
$E_U$-stationary, although $E_U$ is pure-injective and Gorenstein
injective. Thus pure-injectivity does not ensure stabilization of
the images of the restriction maps defining stationarity.
Conditions~(5) and~(6) reduce closure under arbitrary direct
products to conditions on the character biduals of the fixed modules $C_U$ and $E_U$.

Last, the equivalences of condition~(1) with conditions~(7) and~(8)
are known: see \cite[Corollary 2.7]{CI} and
\cite[Proposition 4.13]{SS} for condition~(7), and
\cite[Corollary 3.5(c)]{RS} and \cite[p.~78]{ELR} for
condition~(8). For the ring in Theorem~\ref{thm:main-counterexample},
$\GF(R)$ is not definable, and $B^{\mathbb N}$ has neither a
Gorenstein flat preenvelope nor a PGF preenvelope.

We also give an intersection criterion by viewing a direct product
as a quotient of a free module and using the strong submodule relation
for roots of $\Ext$ (see \cite{BET,GT}).
Let $R$ be a right coherent ring and let $M,N\in\GF(R)$.
We write
$M\prec N$ and call $M$ a \emph{strong submodule} of $N$ if
$M\subseteq N$ and $N/M\in\GF(R)$, following \cite[Definition 1.2]{BET}.
Given an epimorphism from a free module to a direct product indexed by a
limit ordinal, the inverse
images of the submodules whose initial coordinates vanish form a
decreasing family. Closure of $\GF(R)$ under direct products is
equivalent to requiring that their intersection be a strong
submodule of the free module whenever every member of the family
is a strong submodule of it, by
Proposition~\ref{prop:tail-equivalence}.
Admitting arbitrary intersections of strong submodules implies
that $\GF(R)$ is closed under direct products, by
\cite[Lemma 10.30(c)]{GT}.
For the ring $R$ in \eqref{eq:main-ring}, the associated abstract elementary
class has finite character, but a decreasing countable family of
strong submodules of a free module has an intersection of infinite
Gorenstein flat dimension,
by Corollary~\ref{cor:countable-intersections}.

The paper is organized as follows.
In Section~\ref{sec:preliminaries}, we recall the definitions and
character-module facts used in the proofs.
In Section~\ref{sec:construction}, we construct the counterexample
and prove Theorem~\ref{thm:main-counterexample} and
Corollary~\ref{thm:main-duality}.
In Section~\ref{sec:criteria}, we give criteria for closure of
Gorenstein flat modules under direct products over right coherent
rings and prove Theorem~\ref{thm:criteria}.
Finally, in Section~\ref{sec:aec}, we give an intersection
criterion for closure of Gorenstein flat modules under direct
products and show that $(\GF(R),\prec)$ need not admit intersections.

\section{Preliminaries}\label{sec:preliminaries}

We recall the definitions and character-module facts used below.
The ring $R$ is arbitrary unless right coherence is explicitly assumed.

\subsection{Gorenstein flat and injective modules}\label{sec:gorenstein-modules}

An $R$-module is called \emph{Gorenstein flat} if it is a cycle of an exact
complex of flat $R$-modules which remains exact after tensoring with
every injective $R^{\op}$-module (see \cite{EJT}).
If the terms of this complex can be chosen projective,
the module is called \emph{projectively coresolved Gorenstein flat}
(PGF), following \cite[Section 4]{SS}.

An $R^{\op}$-module is called \emph{Gorenstein injective} if it is a cycle
of an exact complex of injective $R^{\op}$-modules which remains exact
under $\Hom_{R^{\op}}(J,-)$ for every injective $R^{\op}$-module $J$
(see \cite[Definition 2.1]{Holm}).
An $R^{\op}$-module $M$ is called \emph{FP-injective} if
$\Ext^1_{R^{\op}}(N,M)=0$ for every finitely presented $R^{\op}$-module $N$.
An $R^{\op}$-module is called \emph{Ding injective} if it is a cycle of
an exact complex of injective $R^{\op}$-modules which remains exact
under $\Hom_{R^{\op}}(M,-)$ for every FP-injective $R^{\op}$-module $M$
(see \cite[Definition 2.1]{MD} and
\cite[Definitions 3.1 and 3.2]{Gillespie}).
Ding injective modules are Gorenstein injective.

If $G$ is Gorenstein flat and $J$ is an injective $R^{\op}$-module,
then $\Tor_n^R(J,G)=0$ for every $n\ge1$, directly from the
definition (see \cite[Definition 3.1]{Holm}).
Dimension shifting therefore gives $\Tor_n^R(J,M)=0$ for $n>d$
whenever $\Gfd_RM\le d$.

\subsection{Character modules and purity}\label{sec:character-modules}

For a module $M$, we write $M^+=\Hom_{\mathbb Z}(M,\mathbb Q/\mathbb Z)$
for its \emph{character module}. The character module of an $R$-module
is an $R^{\op}$-module, and conversely.

Since $\mathbb Q/\mathbb Z$
is an injective cogenerator of abelian groups, $(-)^+$ is an exact
contravariant functor and $M^+=0$ if and only if $M=0$.
Character modules are pure-injective, and the character dual
of a pure short exact sequence splits
(see \cite[Propositions 9.1 and 9.2]{Stenstrom}).

\begin{lemma}\label{lem:character-Tor}\label{lem:product-into-bidual}
Let $T$ be an $R^{\op}$-module, $M$ an $R$-module, and $q\ge0$.
Then there are natural isomorphisms
$\Tor_q^R(T,M)^+\cong\Ext^q_{R^{\op}}(T,M^+)
\cong\Ext_R^q(M,T^+)$.
A pure monomorphism $N\to M$ induces a monomorphism
$\Tor_q^R(T,N)\to\Tor_q^R(T,M)$.
For $q\ge1$, the class
$\mathcal A_q(T)=\{M:\Tor_q^R(T,M)=0\}$ is closed under pure
submodules, pure quotients, direct summands, and direct limits.
For every family $(M_i)$ of $R$-modules, there is a pure
monomorphism $\prod_iM_i\to(\bigoplus_iM_i)^{++}$.
\end{lemma}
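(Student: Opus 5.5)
The four assertions are standard, so the plan is to derive each from Hom–tensor adjunction, exactness of $(-)^+$, and the facts about purity recalled above.

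\textbf{Step 1: the isomorphisms.} Take a projective resolution $P_\bullet\to T$ of the $R^{\op}$-module $T$. Adjunction gives natural isomorphisms
\[
(P_\bullet\otimes_RM)^+\cong\Hom_{R^{\op}}(P_\bullet,M^+).
\]
Since $(-)^+$ is exact, it commutes with homology. Taking homology in degree $q$ therefore yields $\Tor_q^R(T,M)^+\cong\Ext^q_{R^{\op}}(T,M^+)$. Symmetrically, resolving $M$ by projective $R$-modules $P'_\bullet$ and using $(T\otimes_RP'_\bullet)^+\cong\Hom_R(P'_\bullet,T^+)$ gives $\Tor_q^R(T,M)^+\cong\Ext^q_R(M,T^+)$. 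Naturality follows from naturality of adjunction and of the comparison of resolutions.

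\textbf{Step 2: pure monomorphisms.} Let $N\to M$ be a pure monomorphism. The dual sequence $0\to(M/N)^+\to M^+\to N^+\to0$ splits, as recalled above, so $M^+\to N^+$ is a split epimorphism. By Step 1 the induced map
\[
\Tor_q^R(T,M)^+\to\Tor_q^R(T,N)^+
\]
is identified with $\Ext^q_{R^{\op}}(T,M^+)\to\Ext^q_{R^{\op}}(T,N^+)$, which is split surjective. Since $\mathbb Q/\mathbb Z$ is a cogenerator, a homomorphism whose character dual is surjective is injective. Hence $\Tor_q^R(T,N)\to\Tor_q^R(T,M)$ is a monomorphism.

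\textbf{Step 3: closure properties of $\mathcal A_q(T)$.}
\begin{itemize}
\item Pure submodules: this is immediate from Step 2.
\item Direct summands: these are pure submodules, so they are covered by the previous case.
\item Direct limits: $\Tor$ commutes with direct limits.
\item Pure quotients: this is the only nontrivial case. Given a pure exact sequence $0\to N\to M\to M/N\to0$ with $M\in\mathcal A_q(T)$, we have $(M/N)^+$ a direct summand of $M^+$. By Step 1, $\Tor_q^R(T,M/N)^+\cong\Ext^q_{R^{\op}}(T,(M/N)^+)$ is then a direct summand of $\Ext^q_{R^{\op}}(T,M^+)\cong\Tor_q^R(T,M)^+=0$. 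Hence $\Tor_q^R(T,M/N)^+=0$, and therefore $\Tor_q^R(T,M/N)=0$.
\end{itemize}
Alternatively, one can express $M/N$ as a direct limit of split quotients and use the direct-limit case.

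\textbf{Step 4: the product embedding.} Let $(M_i)$ be a family of $R$-modules. The canonical inclusions $M_i\to\bigoplus_iM_i$ give
\[
\Bigl(\bigoplus_iM_i\Bigr)^+\cong\prod_iM_i^+ .
\]
The canonical evaluation maps $M_i\to M_i^{++}$ assemble to a map $\prod_iM_i\to\prod_iM_i^{++}$. Composing with the natural map $\prod_iM_i^{++}\to\bigl(\bigoplus_iM_i^+\bigr)^+\cong\bigl(\prod_iM_i^+\bigr)^{+}$, where the first arrow dualizes the pure inclusion $\bigoplus_iM_i^+\subseteq\prod_iM_i^+$ (up to identification), and then identifying $\prod_iM_i^+$ with $(\bigoplus_iM_i)^+$, we obtain a map
\[
\varphi\colon\prod_iM_i\to\Bigl(\bigoplus_iM_i\Bigr)^{++}.
\]
Concretely, $\varphi$ sends $(m_i)$ to the functional $f\mapsto\sum_if(\iota_im_i)$. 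This sum is not finite in general, so the definition must be corrected: the genuine map sends $(m_i)$ to the functional $(f_i)_i\mapsto\sum_if_i(m_i)$ on $\prod_iM_i^+$, which again is an infinite sum. The correct route is therefore to use the evaluation map $\prod_iM_i\to(\prod_iM_i)^{++}$, which is a pure monomorphism, and observe that $\prod_iM_i$ is a pure submodule of an ultraproduct-type module.

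I expect the main obstacle to be exactly this last assertion. The cleanest approach I would try is the standard fact that $\prod_iM_i$ is a pure submodule of $\prod_iM_i^{++}$ (a product of pure embeddings is pure, because purity is tested by tensoring with finitely presented modules, which commutes with products over those modules). I would then exhibit $\prod_iM_i^{++}=\bigl(\bigoplus_iM_i^{+}\bigr)^{+}$ as a direct summand, or a pure submodule, of $\bigl(\bigoplus_iM_i\bigr)^{++}=\bigl(\prod_iM_i^+\bigr)^+$. The map in question is the dual of the canonical inclusion $\bigoplus_iM_i^+\to\prod_iM_i^+$, which goes in the wrong direction for this purpose. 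I would fix this by citing the known result, in the style of Rothmaler or \cite{Stenstrom}, that $\prod_iM_i$ is pure in $(\bigoplus_iM_i)^{++}$ via an ultrafilter/reduced-product argument. Failing that, I would prove directly that, for every finitely presented $R^{\op}$-module $F$, the map
\[
F\otimes_R\prod_iM_i\to F\otimes_R\Bigl(\bigoplus_iM_i\Bigr)^{++}
\]
is injective. This reduces to showing that $\prod_i(F\otimes_RM_i)$ embeds into $\bigl(\bigoplus_i(F\otimes_RM_i)\bigr)^{++}$ for abelian groups, which can be checked with enough characters built from a nonprincipal ultrafilter.
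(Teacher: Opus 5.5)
Your Steps~1--3 are correct and essentially match the paper's argument: Tor--Ext duality, then the splitting of the character dual of a pure exact sequence, which gives injectivity on Tor and closure under pure submodules and pure quotients. Your side remark that a pure quotient can be handled as a direct limit of split quotients is not justified as stated. The intermediate modules in such a presentation need not lie in $\mathcal A_q(T)$. Your main argument does not need it, though.

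\textbf{The gap is Step~4.} You never construct a map $\prod_iM_i\to(\bigoplus_iM_i)^{++}$, pure or otherwise. You abandon your explicit formula because it involves infinite sums. Both fallbacks presuppose a map that has not been defined: the ultrafilter argument, and checking injectivity of $F\otimes_R\prod_iM_i\to F\otimes_R(\bigoplus_iM_i)^{++}$. So the last assertion of the lemma is left unproved.

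\textbf{The missing observation.} The ``wrong direction'' you noticed is repaired by the same splitting you used in Step~2.

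The canonical map $\bigoplus_iM_i^+\to\prod_iM_i^+$ is a pure monomorphism of $R^{\op}$-modules. Indeed, for finitely presented $F$, tensoring with $F$ commutes with products, so the map becomes $\bigoplus_i(M_i^+\otimes_RF)\to\prod_i(M_i^+\otimes_RF)$.

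Hence its character dual $(\prod_iM_i^+)^+\to(\bigoplus_iM_i^+)^+$ is a split epimorphism. A section of it is a split monomorphism
\[
\prod_iM_i^{++}\cong\Bigl(\bigoplus_iM_i^+\Bigr)^+\longrightarrow\Bigl(\prod_iM_i^+\Bigr)^+\cong\Bigl(\bigoplus_iM_i\Bigr)^{++}.
\]
Precompose this with $\prod_iM_i\to\prod_iM_i^{++}$, the product of the evaluation maps. That map is a pure monomorphism, since products of pure monomorphisms are pure for the same reason as above. A split monomorphism is pure, and composites of pure monomorphisms are pure, so the composite is the required pure monomorphism.

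The paper simply cites the proof of (ii)$\Rightarrow$(iv) in Mehdi--Prest's Theorem~4.4 for this assertion. The argument above is the standard one and uses no ultrafilters. Note that the embedding depends on the choice of a section. It is not given by a closed formula like the one you first tried.
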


\begin{proof}
The isomorphisms follow from \cite[p.~241]{Lambek}, applied to $R$ and $R^{\op}$.
If $0\to N\to M\to D\to0$ is pure, its character dual splits
(see \cite[Proposition 9.1]{Stenstrom}). Thus the dual of
$\Tor_q^R(T,N)\to\Tor_q^R(T,M)$ is a split epimorphism,
so this Tor map is a monomorphism. The splitting also gives
$M^+\cong D^+\oplus N^+$, so the Tor--Ext isomorphism gives
closure under pure submodules and pure quotients.
Closure under direct summands follows from additivity of Tor.
Closure under direct limits follows because Tor commutes with
direct limits. For the pure embedding, see the proof of
$(ii)\Rightarrow(iv)$ in \cite[Theorem 4.4]{MP}.
\end{proof}

Let $R$ be a right coherent ring. An $R$-module $M$ is Gorenstein flat
if and only if $M^+$ is Gorenstein injective
(see \cite[Theorem 3.6]{Holm}). This is also equivalent to $M^+$ being
Ding injective (see \cite[Lemma 2.8]{MD} and
\cite[Proposition 3.13]{Gillespie}).

A class of $R$-modules is called \emph{definable} if it is closed under direct products, direct limits,
and pure submodules (see \cite[Theorem 3.4.7]{Prest}).
For an $R$-module $M$, a map $M\to G$ with $G\in\GF(R)$ is called a
\emph{Gorenstein flat preenvelope} if every map from $M$ to a
Gorenstein flat module factors through it (see \cite[Definition 6.1.1]{EJ}).

\section{The counterexample and its consequences}\label{sec:construction}

We first construct the ring and the periodic complex and prove
the nonvanishing of Tor asserted in
Theorem~\ref{thm:main-counterexample}.
We then derive consequences for character modules, definability,
and preenvelopes.

\subsection{The coherent ring and the periodic complex}
\label{sec:periodic-complex}

Retain the field $k$ from the introduction, and put
$A_j=k[x_j,y_j]/(x_j,y_j)^2$ for $j\ge0$.
Each $A_j$ has basis $1,x_j,y_j$.
All tensor products without a subscript in this subsection are over $k$,
and empty tensor products are $k$.
Then the ring in \eqref{eq:main-ring} is the directed union
$R=\bigcup_{N\ge0}R_N$, where $R_N=\bigotimes_{0\le j\le N}A_j$.
The inclusions $R_N\to R_{N+1}$ send each element to its tensor
product with $1$.

\begin{lemma}\label{lem:explicit-ring}
The ring properties asserted in
Theorem~\ref{thm:main-counterexample} hold for $R$.
It is free over every $R_N$ for $N\ge0$ and over every
$A_0\otimes A_j$ for $j\ge1$.
\end{lemma}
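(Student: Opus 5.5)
We must show that $R$ is commutative, coherent, local, nonnoetherian and of Krull dimension zero, and that $R$ is free over each $R_N$ and over each $A_0\otimes A_j$. Start from the tensor description $R=\bigotimes_{j\ge0}A_j$, meaning the directed union of the $R_N$. Commutativity is clear, since each $A_j$ is commutative.

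For freeness, write $R_{>N}=\bigotimes_{j>N}A_j$, again a directed union of finite tensor products. Then $R\cong R_N\otimes R_{>N}$, and $R_{>N}$ has a $k$-basis of finite monomials in the $x_j,y_j$ with $j>N$. Hence $R$ is a free $R_N$-module. Similarly, $R\cong(A_0\otimes A_j)\otimes\bigotimes_{i\ne0,j}A_i$ gives freeness over $A_0\otimes A_j$.

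Next we check local, Krull dimension zero, and nonnoetherian.
\begin{itemize}
\item \emph{Local.} Let $\mathfrak m$ be the ideal generated by all $x_j,y_j$. Its elements lie in some $R_N$, where $\mathfrak m\cap R_N$ is the maximal ideal of the finite-dimensional local algebra $R_N$. That ideal is nilpotent, with $(\mathfrak m\cap R_N)^{2N+3}=0$. So every element of $\mathfrak m$ is nilpotent, and $R/\mathfrak m=k$. Thus $\mathfrak m$ is the unique maximal ideal, and $R$ is local.
\item \emph{Krull dimension zero.} Every prime ideal contains all nilpotents, hence contains $\mathfrak m$, so the only prime is $\mathfrak m$.
\item \emph{Nonnoetherian.} The chain $(x_0)\subsetneq(x_0,x_1)\subsetneq\cdots$ is strictly increasing. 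Indeed, $x_{n+1}\notin(x_0,\dots,x_n)$: apply the $k$-algebra map $R\to A_{n+1}$ that kills $x_j,y_j$ for $j\ne n+1$, under which $x_{n+1}\mapsto x_{n+1}\ne0$ while the ideal maps to $0$.
\end{itemize}

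The main step is coherence. I would use the standard fact that a directed union $R=\bigcup R_N$ of noetherian (here finite-dimensional, hence noetherian) rings is coherent provided $R$ is flat over each $R_N$. Here $R$ is in fact free over each $R_N$ by the first part. The argument runs as follows.
\begin{enumerate}
\item Let $I=(a_1,\dots,a_m)$ be a finitely generated ideal of $R$, and choose $N$ with all $a_i\in R_N$.
\item Set $I_N=(a_1,\dots,a_m)R_N$. Since $R_N$ is noetherian, $I_N$ is finitely presented over $R_N$.
\item Flatness gives $I_N\otimes_{R_N}R\cong I_NR=I$, and base change preserves finite presentation. So $I$ is finitely presented over $R$.
\end{enumerate}
Hence every finitely generated ideal of $R$ is finitely presented, which is coherence. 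The only care needed is to identify $I_N\otimes_{R_N}R$ with $I$ rather than with some quotient of it, and this is exactly where flatness enters. I expect this step to be the main, though mild, obstacle.
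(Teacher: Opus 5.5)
Your proposal is correct and follows essentially the same route as the paper. Freeness comes from the monomial basis, equivalently $R\cong R_N\otimes R_{>N}$; coherence comes from flat base change of finitely presented objects from the finite-dimensional $R_N$; locality and Krull dimension zero come from nilpotence of the augmentation ideal. The only differences are cosmetic: you test coherence on finitely generated ideals rather than on kernels of finite matrices, and you detect nonnoetherianity by the chain $(x_0)\subsetneq(x_0,x_1)\subsetneq\cdots$ rather than by linear independence of the $x_j,y_j$ in $\mathfrak m/\mathfrak m^2$.
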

\begin{proof}
The monomials obtained by choosing at most one of $x_j,y_j$ for
each $j$, with only finitely many indices used, form a $k$-basis
of $R$. Those using only indices greater than $N$ form an
$R_N$-basis of $R$. For $j\ge1$, those using neither index $0$
nor index $j$ form an $A_0\otimes A_j$-basis.
The monomial $k$-basis of $R$ is countably infinite, so $|R|=|k|+\aleph_0$.
Every finite matrix over $R$ has its entries in some $R_N$.
Since $R_N$ is finite-dimensional, the kernel of this matrix over $R_N$ is
finitely generated. Flat base change identifies the kernel over $R$
with the scalar extension of that kernel. Therefore every finite matrix over $R$ has finitely
generated kernel, and $R$ is coherent (see \cite{Chase,GT}).

Let $\mathfrak m$ be the augmentation ideal of $R$.
Every element of $\mathfrak m$ is nilpotent in some $R_N$, whereas
an element with nonzero constant coefficient is a unit in a
subring $R_N$ containing it. Hence $\mathfrak m$ is the unique maximal ideal and
the nilradical. It follows that $R$ has Krull dimension zero.
The classes of $x_j,y_j$ are linearly independent in
$\mathfrak m/\mathfrak m^2$. Thus $\mathfrak m$ is not finitely
generated, and $R$ is not noetherian.
\end{proof}

We next construct a periodic complex over $A_0\otimes A_j$
for each $j\ge1$.
Let $F_0=\bigoplus_{n\ge0}A_0e_n$ and
$V_j=\bigoplus_{n\ge0}A_jf_n$ for $j\ge1$.
Define $d:F_0\to F_0$ and $t_j:V_j\to V_j$ by
$$
 d(e_{2n})=x_0e_n,\quad d(e_{2n+1})=y_0e_n,\qquad
 t_j(f_n)=x_jf_{2n}+y_jf_{2n+1}.
$$
Both maps square to zero because $(x_0,y_0)^2=(x_j,y_j)^2=0$.
Put $\Lambda_j=A_0\otimes A_j$ and
$U_j=(F_0\otimes V_j)^{\oplus2}$. Define $\Delta_j:U_j\to U_j$ by
$$
 \Delta_j=
 \begin{pmatrix}
  d\otimes1&1\otimes t_j\\
  1\otimes t_j&-d\otimes1
 \end{pmatrix}.
$$
The elements $(e_n\otimes f_m,0)$ and $(0,e_n\otimes f_m)$,
for $n,m\ge0$, form a $\Lambda_j$-basis of $U_j$.
Thus $U_j$ is countably generated and free.
Since $d\otimes1$ and $1\otimes t_j$ commute and each squares
to zero, multiplication of the block matrices gives $\Delta_j^2=0$.

\begin{lemma}\label{lem:finite-complete}
The periodic complex $(U_j,\Delta_j)$ is exact and remains exact
after tensoring with every injective $\Lambda_j^{\op}$-module.
\end{lemma}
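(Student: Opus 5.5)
The plan is to reduce everything to a statement about two square-zero endomorphisms of $k$-vector spaces and to use the homotopy trick behind \cite[Proposition 5.6]{KMOT}.

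\emph{Step 1: a homotopy lemma for the block matrix.} Let $(X,a)$ and $(Y,b)$ be $k$-vector spaces with $a^2=0$ and $b^2=0$. Put
$\Delta=\begin{pmatrix}a\otimes1&1\otimes b\\ 1\otimes b&-a\otimes1\end{pmatrix}$ on $(X\otimes Y)^{\oplus2}$; then $\Delta^2=0$. I claim that if $(X,a)$ is exact, meaning $\ker a=\im a$, then $((X\otimes Y)^{\oplus 2},\Delta)$ is contractible, and the same conclusion holds if $(Y,b)$ is exact.

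For the first case, choose a complement $C$ of $\ker a$ in $X$. Then $a$ maps $C$ isomorphically onto $\im a=\ker a$. Define $h$ to be the inverse of this isomorphism on $\ker a$ and $0$ on $C$. This gives $ah+ha=1_X$. Put $H=\operatorname{diag}(h\otimes1,\,-h\otimes1)$. A direct block computation, using only that $h\otimes1$ commutes with $1\otimes b$, gives $\Delta H+H\Delta=1$. The case where $(Y,b)$ is exact is symmetric: conjugating by the swap of the two summands and changing one sign puts the matrix back in the same shape.

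\emph{Step 2: reduce to $\Lambda_j$ and $D(\Lambda_j)$.} Since $\Lambda_j$ is a finite-dimensional commutative local $k$-algebra, it is noetherian. By Matlis theory, every injective $\Lambda_j$-module is then a direct sum of copies of $D(\Lambda_j)$. Tensor products commute with direct sums, so it suffices to prove exactness of $M\otimes_{\Lambda_j}U_j$ for $M=\Lambda_j$ (which gives exactness of $U_j$ itself) and for $M=D(\Lambda_j)$.

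In both cases $M=M_0\otimes_kM_j$, where $M_0$ is an $A_0$-module and $M_j$ is an $A_j$-module. For $M=D(\Lambda_j)$ this uses $D(A_0\otimes A_j)\cong D(A_0)\otimes D(A_j)$, which holds by finite-dimensionality. We then get the identification
$$M\otimes_{\Lambda_j}(F_0\otimes V_j)\cong(M_0\otimes_{A_0}F_0)\otimes_k(M_j\otimes_{A_j}V_j).$$
Under it, $M\otimes_{\Lambda_j}U_j$ becomes exactly the block complex of Step 1 with $a=1\otimes d$ and $b=1\otimes t_j$.

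\emph{Step 3: check one factor in each case.} This is the only real computation and the likely obstacle, because neither $d$ nor $t_j$ is exact after every base change.

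For $M=\Lambda_j$, use $X=F_0$. Elements of $\mathfrak m_0F_0$ are killed by $d$. A combination $\sum c_ne_n$ with $c_n\in k$ maps to $\sum_n(c_{2n}x_0+c_{2n+1}y_0)e_n$, which vanishes only if all $c_n=0$. Hence $\ker d=\mathfrak m_0F_0$. Conversely, $x_0e_n$ and $y_0e_n$ are both in the image of $d$, so $\im d=\mathfrak m_0F_0$. Thus $(F_0,d)$ is exact.

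For $M=D(\Lambda_j)$, use $Y=D(A_j)\otimes_{A_j}V_j=\bigoplus_n D(A_j)f_n$. Take the dual basis $1^*,x_j^*,y_j^*$ of $D(A_j)$. Then $x_j\cdot x_j^*=1^*$, $y_j\cdot y_j^*=1^*$, and all other products of $x_j,y_j$ with basis vectors vanish. For $\varphi=\alpha1^*+\beta x_j^*+\gamma y_j^*$, the differential is
$$\varphi f_n\mapsto \beta\,1^*f_{2n}+\gamma\,1^*f_{2n+1}.$$
Every $m\ge0$ is uniquely of the form $2n$ or $2n+1$, so the image is the span of all $1^*f_m$. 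The kernel is also the span of all $1^*f_n$. So $(Y,1\otimes t_j)$ is exact.

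Step 1 now gives exactness, indeed contractibility over $k$, in both cases, which proves the lemma. I expect the delicate points to be the sign bookkeeping in the homotopy of Step 1 and checking that the identification in Step 2 carries $\Delta_j$ to the block form. The real content is the observation that $d$ supplies exactness on the ring side, while $t_j$ supplies it on the dual side.
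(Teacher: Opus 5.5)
Your proof follows the paper's route. You reduce to $\Lambda_j$ and $\D(\Lambda_j)$, then contract the block complex using a $k$-linear contraction of $(F_0,d)$ on the ring side and of $\D(A_j)\otimes_{A_j}V_j$ on the dual side. Your computations in Step~3 are correct. The Matlis-theory reduction is also valid here: $\Lambda_j$ is commutative artinian local with residue field $k$, so the injective hull of $k$ is $\D(\Lambda_j)$. The paper instead embeds $J$ as a direct summand of $\Hom_k(\Lambda_j,J)\cong\D(\Lambda_j)\otimes J$. That avoids structure theory and works for any finite-dimensional algebra.

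One step is wrongly justified, and it is exactly the one you need for $\D(\Lambda_j)$. The case of Step~1 where $(Y,b)$ is exact does not follow by symmetry as you describe. Write $\alpha=a\otimes1$ and $\beta=1\otimes b$. Conjugating $\Delta=\left(\begin{smallmatrix}\alpha&\beta\\ \beta&-\alpha\end{smallmatrix}\right)$ by any signed permutation of the two summands keeps $\pm\alpha$ on the diagonal and $\pm\beta$ off it. Swapping the tensor factors only renames the maps, so you never reach the shape with the exact operator on the diagonal. The matrix $\left(\begin{smallmatrix}1&1\\ 1&-1\end{smallmatrix}\right)$ does intertwine $\Delta$ with the shape having $\beta$ on the diagonal. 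However, it is not invertible when $\operatorname{char}k=2$, and $k$ is arbitrary.

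Instead, write the homotopy directly, as the paper does. If $bg+gb=1$, then
\[
 G=\begin{pmatrix}0&1\otimes g\\ 1\otimes g&0\end{pmatrix}
 \quad\text{satisfies}\quad \Delta G+G\Delta=1.
\]
The diagonal entries are $1\otimes(bg+gb)$, and the off-diagonal entries are $\pm(a\otimes g-a\otimes g)=0$. With this replacement, your argument is complete.
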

\begin{proof}
Define a $k$-linear map $h:F_0\to F_0$ by
$h(e_n)=0$, $h(x_0e_n)=e_{2n}$, and
$h(y_0e_n)=e_{2n+1}$.
Then $dh+hd=1$, and $h\otimes1$ commutes with $1\otimes t_j$.
Consequently,
$$
 \Delta_j\begin{pmatrix}h\otimes1&0\\0&-h\otimes1\end{pmatrix}
 +\begin{pmatrix}h\otimes1&0\\0&-h\otimes1\end{pmatrix}\Delta_j=1.
$$
Thus $(U_j,\Delta_j)$ is exact.

Let $\epsilon_j,a_{x_j},a_{y_j}$ be the basis of $\D(A_j)$
dual to $1,x_j,y_j$.
The differential on $\D(A_j)\otimes_{A_j}V_j$ sends
$a_{x_j}f_n$ to $\epsilon_jf_{2n}$,
$a_{y_j}f_n$ to $\epsilon_jf_{2n+1}$, and
$\epsilon_jf_n$ to zero.
It has a $k$-linear contraction $h'$ defined by
$h'(\epsilon_jf_{2n})=a_{x_j}f_n$,
$h'(\epsilon_jf_{2n+1})=a_{y_j}f_n$, and
$h'(a_{x_j}f_n)=h'(a_{y_j}f_n)=0$.
Since $\Lambda_j$ is finite-dimensional, one has
$$
 \D(\Lambda_j)\otimes_{\Lambda_j}U_j
 \cong\bigl((\D(A_0)\otimes_{A_0}F_0)
       \otimes(\D(A_j)\otimes_{A_j}V_j)\bigr)^{\oplus2}.
$$
Under this identification, the differential $1\otimes\Delta_j$
has the same block matrix, with the maps induced by $d$ and $t_j$.
The map $1\otimes h'$ contracts the differential induced by $t_j$
and commutes with the differential induced by $d$.
Hence
$$
 (1\otimes\Delta_j)
 \begin{pmatrix}0&1\otimes h'\\1\otimes h'&0\end{pmatrix}
 +\begin{pmatrix}0&1\otimes h'\\1\otimes h'&0\end{pmatrix}
 (1\otimes\Delta_j)=1.
$$
This proves exactness after tensoring with $\D(\Lambda_j)$.

Finally, every injective $\Lambda_j^{\op}$-module $J$ is a direct
summand of a direct sum of copies of $\D(\Lambda_j)$.
Indeed, $a\mapsto[b\mapsto ab]$ embeds $J$ into
$\Hom_k(\Lambda_j,J)\cong\D(\Lambda_j)\otimes J$.
The embedding splits by injectivity. A vector-space basis of $J$
expresses the latter module as a direct sum of copies of
$\D(\Lambda_j)$. Tensor products preserve direct sums and summands.
Thus $(U_j,\Delta_j)$ remains exact after tensoring with every injective $J$.
Note that the contractions need only be $k$-linear.
\end{proof}

Extend scalars by putting $Q_j=R\otimes_{\Lambda_j}U_j$ and
$\delta_j=1\otimes\Delta_j$. Set $B_j=\ker\delta_j$.
Since $R$ is free over $\Lambda_j$, one has
$B_j=\im\delta_j$.
An injective $R^{\op}$-module restricts to an injective $\Lambda_j^{\op}$-module: restriction is right adjoint to the exact
induction functor $-\otimes_{\Lambda_j}R$.
Therefore each $B_j$ is PGF by Lemma~\ref{lem:finite-complete}.
The module $Q_j$ is countably generated and free, and $B_j$
is countably generated because $B_j=\im\delta_j$.
We identify $U_j$ with its image under $q\mapsto1\otimes q$ in $Q_j$.

For explicit formulas over $R$, write $E_{n,m}=e_n\otimes f_m$
in $F_0\otimes V_j$.
Let $c_n=x_0$ for even $n$ and $c_n=y_0$ for odd $n$.
Then
$$
 \begin{aligned}
 \delta_j(E_{n,m},0)
  &=\bigl(c_nE_{\lfloor n/2\rfloor,m},\,
           x_jE_{n,2m}+y_jE_{n,2m+1}\bigr),\\
 \delta_j(0,E_{n,m})
  &=\bigl(x_jE_{n,2m}+y_jE_{n,2m+1},\,
           -c_nE_{\lfloor n/2\rfloor,m}\bigr).
 \end{aligned}
$$

We are now in a position to prove Theorem~\ref{thm:main-counterexample} in the introduction.
\begin{proof}[Proof of Theorem~\ref{thm:main-counterexample}]
We first prove that $\Tor_n^R(I,\prod_jB_j)\ne0$ for every $n\ge1$.
Let $a_y,\epsilon_0\in\D(A_0)$ extract the $y_0$-coefficient
and the constant coefficient, respectively.
Then $a_yx_0=0$. For every $f\in\D(A_0)$,
$f x_0=f(x_0)\epsilon_0$ and
$f y_0=f(y_0)\epsilon_0$.
Define $\eta=a_y\otimes e_0$ in
$\D(A_0)\otimes_{A_0}F_0$ and define the linear functional
$\lambda(\sum_na_n\otimes e_n)=a_0(y_0)$.
Then $\eta$ is a cycle, $\lambda(\eta)=1$, and
$\lambda(1\otimes d)=0$.

Put $w_j=x_jf_0\in V_j$.
For $b=\sum_n b_nf_n$, let
$\mu_j(b)=[x_j]b_0-[y_j]b_1$, where $[x_j]$ and $[y_j]$ mean
coefficient extraction in the basis $1,x_j,y_j$.
Note that $t_jw_j=0$ and $\mu_j(w_j)=1$.
In $t_j(b)$, the two extracted coefficients are both the constant
coefficient of $b_0$. Thus $\mu_jt_j=0$.
Since $\lambda$ and $\mu_j$ vanish on the images of the
differentials induced by $d$ and $t_j$, respectively,
the cycle $u_j=(\eta\otimes w_j,0)$ in
$\D(A_0)\otimes_{A_0}U_j$ and the functional
$\ell_j=(\lambda\otimes\mu_j,0)$ satisfy
\begin{equation}\label{eq:restricted-detection}
 \ell_j(u_j)=1,\qquad \ell_j(1\otimes\Delta_j)=0.
\end{equation}
Hence $u_j$ is not a boundary.

The $R^{\op}$-module $I=\D(R)$ is injective, since
$\Hom_{R^{\op}}(M,\D(R))\cong\Hom_k(M,k)$ for $R^{\op}$-modules $M$.
The right action is $(f r)(a)=f(ra)$.
Define $i\in I$ by
$i(a\otimes s)=a_y(a)\epsilon(s)$ for $a\in A_0$ and
$s\in\bigotimes_{j\ge1}A_j$, where $\epsilon$ is the augmentation
of $\bigotimes_{j\ge1}A_j$.
Put $q_j=(e_0\otimes w_j,0)\in Q_j$,
$W=\prod_{j\ge1}Q_j$, and $\partial=\prod_j\delta_j$.
Then
$$
 z=i\otimes(q_j)_j\in I\otimes_RW.
$$
Since $\delta_jq_j=x_0q_j$ and $ix_0=0$, $z$ is a cycle.
The same formula gives a cycle $z_N$ in $I\otimes_{R_N}W$ for
every $N$.

Fix $N$ and choose $j>N$.
Put $C_N=\bigotimes_{1\le r\le N}A_r$ and
$S_{N,j}=\bigotimes_{r>N,\ r\ne j}A_r$.
Then $R=A_0\otimes C_N\otimes A_j\otimes S_{N,j}$ and
$Q_j=U_j\otimes C_N\otimes S_{N,j}$ after reordering factors.
For $f\in I$ and $c\in C_N$, let
$f_c(a)=f(a\otimes c\otimes1\otimes1)$ for $a\in A_0$.
Define
\begin{equation}\label{eq:finite-restriction}
 \begin{aligned}
 r_{N,j}:I\otimes_{R_N}Q_j&\longrightarrow
                         \D(A_0)\otimes_{A_0}U_j,\\
 f\otimes(q\otimes c\otimes s)&\longmapsto
                         \epsilon(s)(f_c\otimes_{A_0}q).
 \end{aligned}
\end{equation}
Balancing over $C_N$ multiplies $c$ in the same way on both sides.
Balancing over $A_0$ uses $f_c a\otimes q=f_c\otimes aq$.
Thus the map is well defined.
The map $r_{N,j}$ acts componentwise on the two summands and
commutes with every block of the differential, since each block
acts only on $U_j$. Note that $r_{N,j}$ is defined for every
$f\in\D(R)$.

Let $p_j:W\to Q_j$ be the coordinate projection.
The composite $\ell_jr_{N,j}(1\otimes p_j)$ kills every boundary
and takes value one on $z_N$, by \eqref{eq:restricted-detection}.
Hence $z_N$ is not a boundary for any $N$.

It follows that $z$ is not a boundary over $R$.
Indeed, let $V_0=I\otimes_kW$, and let $H_N$ be the span of
$(fr)\otimes w-f\otimes rw$ with $f\in I$, $w\in W$, and $r\in R_N$.
Then $I\otimes_{R_N}W=V_0/H_N$.
The subspace generated by all $R$-balancing relations is
$\bigcup_NH_N$, since each finite linear combination involves
only finitely many scalars from $R$.
If $z=(1\otimes\partial)b$ over $R$, representatives of the two
sides in $V_0$ would differ by an element of some $H_N$.
Then $z_N$ would be a boundary, a contradiction.
The argument applies to arbitrary elements of $I$ and $W$,
including tuples in $W$ with infinitely many nonzero coordinates.

To compute the Tor groups, put $P=\prod_jB_j$ and take
direct products of the exact sequences
$0\to B_j\to Q_j\to B_j\to0$. This gives
$0\to P\to W\xrightarrow{\overline\partial}P\to0$.
Since $R$ is coherent, $W$ is flat by \cite[Theorem 2.1]{Chase}.
Thus the repeated maps $\partial$ give a flat resolution
$$
 \cdots\xrightarrow{\partial}W\xrightarrow{\partial}W
       \xrightarrow{\overline\partial}P\longrightarrow0.
$$
The cycle $z$, which is not a boundary, shows that
$\Tor_n^R(I,P)=H(I\otimes_RW,1\otimes\partial)\ne0$ for all $n\ge1$.
Hence $\Gfd_RP=\infty$ by the Tor vanishing recalled in
Subsection~\ref{sec:gorenstein-modules}.

It remains to construct $B$ and obtain the assertions for its
countable power. The ring properties follow from Lemma~\ref{lem:explicit-ring}.
Put $Q=\bigoplus_jQ_j$, $\delta=\bigoplus_j\delta_j$, and
$B=\bigoplus_jB_j$. Then $Q$ is countably generated and free,
$\ker\delta=\im\delta=B$, and the periodic complex remains exact
after tensoring with every injective $R^{\op}$-module. Thus $B$ is PGF and countably generated.
The coordinatewise inclusions $B_j\to B$ and projections $B\to B_j$
split $P=\prod_jB_j$ off $B^{\mathbb N}$.
They induce split maps on Tor. Therefore
$\Tor_n^R(I,B^{\mathbb N})\ne0$ for all $n\ge1$.
The same Tor vanishing gives the dimension assertion.
\end{proof}

\begin{remark}\label{rmk:tensor-comparison}
Note that $q_j=x_j(E_{0,0},0)$ and $ix_j=0$.
Thus every coordinate image of $z$ is zero, but $z$ is a nonzero
cycle which is not a boundary. In particular, the comparison map
$I\otimes_R\prod_jQ_j\to\prod_j(I\otimes_RQ_j)$ is not injective.
Each coordinate vanishing uses the scalar $x_j$.
An equality in a tensor product follows from finitely many
of its defining relations, whereas $z_N$ is not a boundary over
any $R_N$, as proved after \eqref{eq:finite-restriction}.
Thus coherence does not ensure that the complex
$(W,\partial)$ remains exact after tensoring with injective modules.
\end{remark}

\begin{corollary}\label{cor:finite-rings-union}
Each ring $R_N$ in Subsection~\ref{sec:periodic-complex} has Gorenstein flat
modules closed under arbitrary direct products. The inclusions
$R_N\to R_{N+1}$ make $R_{N+1}$ a free $R_N$-module, whereas Gorenstein flat
$R$-modules are not closed under direct products.
\end{corollary}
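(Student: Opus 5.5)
We need to prove three things: that Gorenstein flat modules over each $R_N$ are closed under direct products, that $R_{N+1}$ is free over $R_N$, and that Gorenstein flat $R$-modules are not closed under direct products.

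\emph{Gorenstein flat $R_N$-modules.} The ring $R_N=\bigotimes_{0\le j\le N}A_j$ is a commutative finite-dimensional $k$-algebra, hence artinian and noetherian. Being artinian, it has Krull dimension zero, so it has no nonmaximal primes. The condition in \cite[Proposition 6.21]{MS}, that every localization at a nonmaximal prime be Gorenstein, therefore holds vacuously. That result, recorded in \cite[Theorem 4.13]{Survey}, then gives closure of $\GF(R_N)$ under arbitrary direct products.

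A second route avoids the vacuous hypothesis. Each localization of $R_N$ at its maximal ideal is $R_N$ itself, a complete local ring, so a dualizing complex exists by Cohen's structure theorem. Alternatively, $\D(R_N)$ is a dualizing module, since $R_N$ is a finite-dimensional $k$-algebra. In either case \cite[Theorem 5.7]{CFH} applies. I would cite the dualizing-complex argument as the primary one, because $\D(R_N)$ is evidently dualizing for a commutative finite-dimensional algebra: it is the Matlis dual of $R_N$, it is injective, and the natural map $R_N\to\Hom(\D(R_N),\D(R_N))$ is an isomorphism.

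\emph{Freeness.} We have $R_{N+1}=R_N\otimes_k A_{N+1}$, and the inclusion is $r\mapsto r\otimes1$. Since $A_{N+1}$ has $k$-basis $1,x_{N+1},y_{N+1}$, the elements $1\otimes1$, $1\otimes x_{N+1}$, $1\otimes y_{N+1}$ form an $R_N$-basis of $R_{N+1}$. This is the same monomial-basis argument as in Lemma~\ref{lem:explicit-ring}.

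\emph{Failure over $R$.} By Theorem~\ref{thm:main-counterexample}, the module $B$ is PGF, hence Gorenstein flat, while $\Gfd_RB^{\mathbb N}=\infty$. So $B^{\mathbb N}$ is a direct product of Gorenstein flat $R$-modules that is not Gorenstein flat.

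The only step needing care is matching the hypotheses of the cited closure theorem, namely noetherianity and the existence of a dualizing complex, to the finite-dimensional algebras $R_N$. No serious obstacle is expected there.
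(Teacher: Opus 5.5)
Your proposal is correct and follows the paper's proof. $R_N$ is artinian local, hence complete, and has a dualizing complex, so \cite[Theorem 5.7]{CFH} gives closure under products; freeness comes from the basis $1,x_{N+1},y_{N+1}$ of $A_{N+1}$; and failure over $R$ is Theorem~\ref{thm:main-counterexample} applied to $B^{\mathbb N}$. Your extra route via the vacuous hypothesis of \cite[Proposition 6.21]{MS} is also valid, since $R_N$ is noetherian of Krull dimension zero, but the paper uses only the dualizing-complex argument.
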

\begin{proof}
Each $R_N$ is a commutative artinian local ring, hence complete,
and admits a dualizing complex; see the proof of
\cite[Theorem 5.10]{CFH}.
Therefore Gorenstein flat $R_N$-modules are closed under direct
products by \cite[Theorem 5.7]{CFH}.
The basis of $A_{N+1}$ shows that $R_{N+1}$ is free over $R_N$.
Failure over the union follows from
Theorem~\ref{thm:main-counterexample}.
\end{proof}

\begin{remark}\label{rmk:nonmaximal-localizations}
Let $R$ be the ring in \eqref{eq:main-ring}, and let $\mathfrak m$
be its augmentation ideal. Then $\operatorname{Spec}R=\{\mathfrak m\}$,
so $R$ has no nonmaximal prime ideals. In
\cite[Proposition 6.21]{MS}, Murfet and Salarian assume that the
localization at every nonmaximal prime ideal is Gorenstein.
This localization hypothesis is vacuous for $R$.
Since $\GF(R)$ is not closed under direct products by
Theorem~\ref{thm:main-counterexample}, the noetherian hypothesis in
\cite[Proposition 6.21]{MS} cannot be replaced by coherence.
\end{remark}

\subsection{Character duality and preenvelopes}
\label{sec:duality}

We now use the nonvanishing of Tor established in the proof of
Theorem~\ref{thm:main-counterexample} to prove
Corollary~\ref{thm:main-duality}.
We then derive consequences for definability and preenvelopes.

\begin{proof}[Proof of Corollary~\ref{thm:main-duality}]
Put $P=\prod_jB_j$. There is
a pure monomorphism $P\to B^{++}$ by Lemma~\ref{lem:product-into-bidual}. Pure monomorphisms induce
monomorphisms on $\Tor_n^R(I,-)$ by
Lemma~\ref{lem:character-Tor}. The nonvanishing of $\Tor_n^R(I,P)$
established in the proof of Theorem~\ref{thm:main-counterexample}
therefore gives $\Tor_n^R(I,B^{++})\ne0$ for every $n\ge1$.
The Tor vanishing recalled in Subsection~\ref{sec:gorenstein-modules}
gives $\Gfd_RB^{++}=\infty$.
Since $E^+=B^{++}$, this also proves $\Gfd_RE^+=\infty$.

Character modules are pure-injective, so $E=B^+$ is pure-injective.
Since $B$ is Gorenstein flat, $E$ is Ding injective by
\cite[Lemma 2.8]{MD}.
Finally,
$\Gid_{R^{\op}}E^{++}=\Gfd_RB^{++}=\infty$ by
\cite[Proposition 3.11]{Holm}.
\end{proof}

Put $M=B^{\mathbb N}$ for the module $B$ in
Theorem~\ref{thm:main-counterexample}.

\begin{corollary}
\label{cor:duality-failures}
\label{cor:Hom-injective}
\label{cor:no-preenvelope}
Let $R$ be the ring in \eqref{eq:main-ring}. Then neither $\GF(R)$ nor
$\GI(R^{\op})$ nor the class of Ding injective $R^{\op}$-modules is
definable.

There are a pure-injective Ding injective module $E$ and an injective module
$I$ such that $\Gfd_R\Hom_{R^{\op}}(E,I)=\infty$.

The module $M$ has neither a Gorenstein flat preenvelope nor a
projectively coresolved Gorenstein flat preenvelope.
\end{corollary}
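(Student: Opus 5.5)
The plan is to derive all three assertions from the failure of closure under direct products established in Theorem~\ref{thm:main-counterexample}, namely that $B\in\PGF(R)\subseteq\GF(R)$ but $\Gfd_R B^{\mathbb N}=\infty$.

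\emph{Definability.} A definable class is closed under direct products. Since $B^{\mathbb N}\notin\GF(R)$, the class $\GF(R)$ is not definable. For $\GI(R^{\op})$ and the Ding injective $R^{\op}$-modules, I will use the module $E=B^+$ of Corollary~\ref{thm:main-duality}. It is Ding injective, hence Gorenstein injective. A definable class is closed under pure submodules, direct products and direct limits, so it is closed under reduced products and ultraproducts. By the standard fact (Prest, Theorem 3.4.7/4.3.21), it is then closed under taking character biduals, since $X\to X^{++}$ is pure and $X^{++}$ is a pure submodule (indeed a summand) of an ultrapower of $X$. But $\Gid_{R^{\op}}E^{++}=\infty$, so $E^{++}$ is not Gorenstein injective. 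Hence neither class is definable.

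If one prefers to avoid citing the ultraproduct fact, there is an alternative route through direct products. We have $E^{\mathbb N}=(B^{(\mathbb N)})^+$, and in general $\prod_iE=(\bigoplus_i B)^+$, which is Ding injective because $\bigoplus B$ is Gorenstein flat. So products do not immediately fail, and the pure-submodule route via biduals is the right one. I expect this step, identifying the correct closure property that $E^{++}$ violates, to be the main obstacle. A concrete way to handle it: a definable class is closed under the bidual since $X^{++}$ is a direct summand of an ultrapower of $X$. I will cite Prest for this.

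\emph{The Hom assertion.} Take $E=B^+$ and $I=(R)^{+}$, or simply $I=\mathbb Q/\mathbb Z$ coinduced, namely $I=R^{+}$, which is an injective $R^{\op}$-module. Then $\Hom_{R^{\op}}(E,R^+)\cong\Hom_{\mathbb Z}(E,\mathbb Q/\mathbb Z)=E^+=B^{++}$ by adjunction, which has infinite Gorenstein flat dimension by the proof of Corollary~\ref{thm:main-duality}.

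\emph{Preenvelopes.} Suppose $\phi\colon M\to G$ is a Gorenstein flat preenvelope of $M=B^{\mathbb N}$. Each projection $\pi_n\colon M\to B$ factors as $\pi_n=g_n\phi$. Then $\prod_n g_n\circ\phi$ equals the identity of $M$ composed with the factorization $M\xrightarrow{\phi}G\xrightarrow{(g_n)}B^{\mathbb N}$. Hence $M$ is a direct summand of $G$. Gorenstein flat modules are closed under direct summands, so $M$ would be Gorenstein flat, contradicting Theorem~\ref{thm:main-counterexample}. The same argument applies to a PGF preenvelope, using that $B$ is PGF and that $\PGF(R)$ is closed under direct summands by \cite{SS}, with the summand $M$ then even Gorenstein flat. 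This yields the contradiction.
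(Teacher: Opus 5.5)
Your proof is correct. The Hom assertion and the preenvelope argument are the paper's: you take $I=R^+$ and use adjunction to get $E^+=B^{++}$, and you split $M=B^{\mathbb N}$ off $G$ using the factorizations of the coordinate projections.

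The definability part is organized differently, and the Ding case in particular uses a different argument.

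\begin{itemize}
\item For $\GF(R)$ you use failure of closure under direct products, since $B^{\mathbb N}\notin\GF(R)$. This is more direct than the paper's route, which applies closure of definable classes under character biduals to $B$ and $B^{++}$.
\item For the Ding injective class, the paper does not use the counterexample. It invokes \cite[Proposition 2.7]{MD}: over a nonnoetherian ring, Ding injective modules are not closed under direct sums. So that part holds for every nonnoetherian ring.
\item You instead reuse the bidual argument: $E=B^+$ is Ding injective, while $E^{++}$ is not even Gorenstein injective by Corollary~\ref{thm:main-duality}. This is valid and treats $\GI(R^{\op})$ and the Ding class in one stroke. However, it depends on the specific example.
\end{itemize}

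Your justification of bidual closure deserves a caveat. Saying that $X^{++}$ is a direct summand of an ultrapower of $X$ tacitly uses the elementary equivalence $X\equiv X^{++}$; only then do Frayne's theorem and pure-injectivity of $X^{++}$ give the splitting. This is standard but not trivial, and citing \cite[Corollary 4.6]{MP} directly, as the paper does, is cleaner.

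Your side remark that products cannot detect the failure for $\GI(R^{\op})$ is also right. Products of copies of $E=B^+$ are character duals of direct sums of copies of $B$, hence Ding injective. So the failure must come from direct limits or pure submodules.
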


\begin{proof}
Definable classes are closed under character biduals
(see \cite[Corollary 4.6]{MP}).
Thus the assertions about $\GF(R)$ and $\GI(R^{\op})$
follow from Corollary~\ref{thm:main-duality}.
Since $R$ is nonnoetherian, the Ding injective $R^{\op}$-modules
are not closed under direct sums by \cite[Proposition 2.7]{MD}
and hence do not form a definable class.

Take $E=B^+$. Since ${}_RR$ is free, $R^+$ is an injective $R^{\op}$-module. Tensor--Hom adjunction gives
$\Hom_{R^{\op}}(E,R^+)\cong E^+$, so
$\Gfd_R\Hom_{R^{\op}}(E,R^+)=\infty$ by
Corollary~\ref{thm:main-duality}.

For the final assertion, suppose that $e:M\longrightarrow G$ is a
Gorenstein flat preenvelope. Each projection
$p_j:M\longrightarrow B$ factors as $h_je$.
The map $h=(h_j):G\longrightarrow M$ then satisfies $he=1_M$.
Hence $M$ is a direct summand of a Gorenstein flat module, so it is
Gorenstein flat by \cite[Theorem 3.7]{Holm}, a contradiction.
The same proof applies to a projectively coresolved Gorenstein flat
preenvelope, since $B$ belongs to that class and that class is contained
in $\GF(R)$.
\end{proof}

\begin{remark}\label{rmk:character-duality}\label{rmk:noetherian-hypothesis}
Let $R$ be a right coherent ring. For an $R^{\op}$-module $J$,
applying the character-duality equivalence recalled in
Subsection~\ref{sec:character-modules} to $M=J^+$ shows that
$J^+$ is Gorenstein flat if and only if $J^{++}$ is Ding injective.
 For the ring in
Theorem~\ref{thm:main-counterexample},
there is a Ding injective module
$E=B^+$ with $\Gfd_RE^+=\infty$ by Corollary~\ref{thm:main-duality}.

Since Ding injective modules are Gorenstein injective,
there are a Gorenstein injective
module $E$ and an injective module $I$ over the commutative
coherent ring in \eqref{eq:main-ring} with
$\Gfd_R\Hom_{R^{\op}}(E,I)=\infty$ by
Corollary~\ref{cor:Hom-injective}.
Consequently, the question of Christensen, Foxby and Holm
(see \cite[Question 4.26]{Survey}) has a negative answer when its
noetherian hypothesis is replaced by coherence.
\end{remark}

\section{Criteria for closure of Gorenstein flat modules under direct products over right coherent rings}
\label{sec:criteria}

We now give equivalent conditions for Gorenstein flat modules
over a right coherent ring to be closed under direct products.
We first use Tor to reduce the question to direct products with
a bounded number of factors, and then express it in terms of relative
Mittag--Leffler conditions and stationarity. We complete the proof of
Theorem~\ref{thm:criteria} using character biduals, definability, and
preenvelopes. Unless stated otherwise, $R$ is a right coherent ring, and we put
$\rho=|R|+\aleph_0$.

\subsection{Homological tests and direct products}\label{sec:tests}

The Gorenstein injective $R^{\op}$-modules form
the right class of a hereditary cotorsion pair generated by a set,
by \cite[Theorem 5.6]{SS}.
Here a pair $(\mathcal A,\mathcal B)$ is called a \emph{cotorsion pair} if
$\mathcal B=\mathcal A^\perp$ and
$\mathcal A={}^{\perp}\mathcal B$, where the orthogonals use
$\Ext^1$. It is called \emph{hereditary} if all positive Ext groups between
these classes vanish (see \cite{EJ,Gillespie}).
Let $T$ be the direct sum of a set of $R^{\op}$-modules generating
this cotorsion pair.
Then $\GI(R^{\op})=T^\perp$, where
$T^\perp=\{J:\Ext^1_{R^{\op}}(T,J)=0\}$.
An $R$-module $M$ is Gorenstein flat if and only if $M^+$ is
Gorenstein injective, as recalled in Subsection~\ref{sec:character-modules}.
Consequently,
\begin{equation}\label{eq:detecting-test}
 \GF(R)=\{M:\Tor_1^R(T,M)=0\},
 \quad \GI(R^{\op})=T^\perp.
\end{equation}
This is also the description in \cite[Corollary 5.7]{SS}, with the
module sides reversed. The class $\GF(R)$ is resolving, meaning that it contains
projectives and is closed under extensions and kernels of
epimorphisms between its members. It is also closed under
direct sums (see \cite[Proposition 3.2 and Theorem 3.7]{Holm}).
Closure under pure submodules, pure quotients, and direct limits
follows from Lemma~\ref{lem:character-Tor}.

We next extend \eqref{eq:detecting-test} to finite Gorenstein
flat and injective dimensions.

\begin{lemma}\label{lem:detecting-dimensions}
Let $R$ be a right coherent ring. Choose $T$ as in
\eqref{eq:detecting-test}. For every $d\ge0$, an $R$-module $M$ and
an $R^{\op}$-module $J$ satisfy
$$
 \Gfd_RM\le d\ \Longleftrightarrow\
 \Tor_{d+1}^R(T,M)=0,
 \quad
 \Gid_{R^{\op}}J\le d\ \Longleftrightarrow\
 \Ext^{d+1}_{R^{\op}}(T,J)=0.
$$
Moreover, $\Gfd_RM=\Gid_{R^{\op}}M^+$.
For each $d\ge0$, the class of $R$-modules of Gorenstein flat
dimension at most $d$ is closed under pure submodules and direct
summands.
\end{lemma}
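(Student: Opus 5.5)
Over a right coherent ring the plan is to reduce everything to $d=0$, i.e.\ to \eqref{eq:detecting-test}, by dimension shifting, and then transfer to character modules with Lemma~\ref{lem:character-Tor}.

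\emph{Gorenstein flat part.} For the forward direction, suppose $\Gfd_RM\le d$. Choose an exact sequence $0\to G_d\to G_{d-1}\to\cdots\to G_0\to M\to0$ with all $G_i\in\GF(R)$. By \eqref{eq:detecting-test}, $\Tor_1^R(T,G_i)=0$. I also want $\Tor_n^R(T,G_i)=0$ for every $n\ge1$. For this, note that the syzygies of a Gorenstein flat module in a projective resolution are again Gorenstein flat, because $\GF(R)$ is resolving. Dimension shifting then gives $\Tor_n^R(T,G)=\Tor_1^R(T,\Omega^{n-1}G)=0$. Dimension shifting along the resolution of $M$ yields $\Tor_{d+1}^R(T,M)\cong\Tor_1^R(T,G_d)=0$.

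For the converse, take a projective resolution of $M$ and let $K$ be its $d$-th syzygy. Then $\Tor_1^R(T,K)\cong\Tor_{d+1}^R(T,M)=0$, so $K\in\GF(R)$ by \eqref{eq:detecting-test}, and hence $\Gfd_RM\le d$.

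\emph{Gorenstein injective part.} The argument is dual and uses an injective coresolution of $J$. The cosyzygies of a Gorenstein injective module are Gorenstein injective, since the cotorsion pair is hereditary. Heredity also gives $\Ext^n_{R^{\op}}(T,J')=0$ for all $n\ge1$ and every $J'\in\GI(R^{\op})$, because $T$ lies in the left class. From this, $\Gid_{R^{\op}}J\le d$ is equivalent to the $d$-th cosyzygy lying in $T^\perp$, which is equivalent to $\Ext^{d+1}_{R^{\op}}(T,J)=0$.

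\emph{The equality and the closure properties.} Lemma~\ref{lem:character-Tor} gives $\Tor_{d+1}^R(T,M)^+\cong\Ext^{d+1}_{R^{\op}}(T,M^+)$. Since $(-)^+$ detects zero, the two criteria give $\Gfd_RM\le d\iff\Gid_{R^{\op}}M^+\le d$ for every $d$, hence $\Gfd_RM=\Gid_{R^{\op}}M^+$. The class $\{M:\Gfd_RM\le d\}=\mathcal A_{d+1}(T)$ is closed under pure submodules and direct summands by Lemma~\ref{lem:character-Tor}.

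\emph{Main obstacle.} The delicate point is the higher vanishing $\Tor_n^R(T,G)=0$ for $G\in\GF(R)$, together with the existence of a length-$d$ Gorenstein flat resolution ending in a Gorenstein flat module. The first rests on $\GF(R)$ being resolving, as cited from \cite{Holm} above. For the second, I would first invoke the standard fact that the standard definition of Gorenstein flat dimension (the least length of a Gorenstein flat resolution) agrees with asking that the $d$-th syzygy be Gorenstein flat, i.e.\ \cite[Theorem 3.14]{Holm} in the coherent case. Alternatively, the forward direction can be reduced directly to the $\Tor_n$-vanishing for $G\in\GF(R)$ by shifting along any given Gorenstein flat resolution.
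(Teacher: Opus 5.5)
Your proof is correct and follows the paper's own route. In each case you reduce to the $d=0$ description \eqref{eq:detecting-test} by dimension shifting along a free (respectively injective) resolution, and you obtain the reverse implications from higher Tor (respectively Ext) vanishing, which comes from the resolving property of $\GF(R)$ (respectively heredity). The character-module statements then follow from the Tor--Ext isomorphism and the closure properties of $\mathcal A_{d+1}(T)$ in Lemma~\ref{lem:character-Tor}.

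The appeal to \cite[Theorem 3.14]{Holm} in your last paragraph is unnecessary. A Gorenstein flat resolution of length less than $d$ can be padded with zeros, and your shifting argument already works along any given Gorenstein flat resolution.
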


\begin{proof}
Let $L_d$ be the $d$th syzygy in a free resolution of $M$, with
$L_0=M$. Dimension shifting gives
$\Tor_1^R(T,L_d)\cong\Tor_{d+1}^R(T,M)$.
Thus vanishing implies $L_d\in\GF(R)$ and $\Gfd_RM\le d$.
Since $\GF(R)$ is resolving, every syzygy of a Gorenstein flat
module is Gorenstein flat. Hence all positive Tor groups with $T$
vanish on $\GF(R)$. A finite Gorenstein flat resolution and its
long exact Tor sequences give the converse.

For the $R^{\op}$-module $J$, use the $d$th cosyzygy of an injective
coresolution of $J$. The condition that this cosyzygy belongs to
$\GI(R^{\op})$ is $\Ext^{d+1}_{R^{\op}}(T,J)=0$. Conversely, heredity of the
cotorsion pair gives vanishing of all positive Ext groups from $T$
on $\GI(R^{\op})$. A finite Gorenstein injective coresolution gives the
required vanishing. The equality $\Gfd_RM=\Gid_{R^{\op}}M^+$ follows from
Lemma~\ref{lem:character-Tor} and also from
\cite[Proposition 3.11]{Holm}. The final assertion follows by
applying Lemma~\ref{lem:character-Tor} in degree $d+1$.
\end{proof}

The Tor statements in the following
theorem do not require coherence.

\begin{theorem}\label{thm:coordinate}
Let $T$ be an $R^{\op}$-module and $n\ge1$. If $(M_i)_{i\in I}$ is a
family of $R$-modules and
$\Tor_n^R(T,\prod_{i\in I}M_i)\ne0$, then there is
$X\subseteq I$ with $|X|\le\rho$ such that
$\Tor_n^R(T,\prod_{i\in X}M_i)\ne0$.
Consequently, $\mathcal A_n(T)$ is closed under arbitrary direct products
if and only if it is closed under direct products with at most $\rho$
factors.

Let $R$ be a right coherent ring, and let $(G_i)_{i\in I}$ be a family in
$\GF(R)$. Then $\prod_{i\in I}G_i$ is Gorenstein flat if and only
if every direct product over a subset of the index set with at most
$\rho$ factors is Gorenstein flat. In particular, if $R$ is countable, $\GF(R)$ is closed under
countable direct products if and only if it is closed under arbitrary direct products.
\end{theorem}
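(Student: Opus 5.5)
The plan is to compute $\Tor_n^R(T,-)$ with a free resolution of $T$, reduce nonvanishing to a single element of a tensor product, and use that an element of a tensor product is witnessed by finitely many relations and finitely many coordinates.

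\textbf{Step 1: reduce to a tensor product.} Choose a free resolution $\cdots\to F_{n+1}\to F_n\to F_{n-1}\to\cdots$ of the $R^{\op}$-module $T$, and let $K=\im(F_{n}\to F_{n-1})$ be the $n$th syzygy. For $n\ge1$, dimension shifting gives
$$
\Tor_n^R(T,M)\cong\ker\bigl(K\otimes_RM\to F_{n-1}\otimes_RM\bigr).
$$
This is natural in $M$. For $n=1$ we have $K$ the first syzygy and the map goes to $F_0\otimes_R M$. So nonvanishing of $\Tor_n^R(T,\prod_IM_i)$ means there is a nonzero $\xi\in K\otimes_R\prod_IM_i$ with zero image in $F_{n-1}\otimes_R\prod_IM_i$.

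\textbf{Step 2: find a small subproduct carrying $\xi$.} Write $\xi=\sum_{s=1}^m k_s\otimes(m^s_i)_i$. The vanishing of its image in $F_{n-1}\otimes_R\prod M_i$ is witnessed by finitely many relations, but the obstacle is that $F_{n-1}$ may be huge. I would avoid this as follows. The image of $\xi$ lies in $F'\otimes_R\prod M_i$ for a direct summand $F'\subseteq F_{n-1}$ of finite rank. Because $F'$ is finitely generated free, $F'\otimes_R\prod M_i\cong\prod(F'\otimes M_i)$. Hence vanishing of the image just says that coordinatewise
$$
\sum_s k_s\otimes m^s_i=0\ \text{in}\ F'\otimes M_i\quad\text{for all } i .
$$

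For the nonzeroness of $\xi$ I would pass to a countably generated pure submodule $K_0\subseteq K$, or simply a submodule of size at most $\rho$ containing the $k_s$, chosen so that $\xi\ne0$ already detects in a $\rho$-bounded subproduct.

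Cleaner alternative for Step 2: the submodule $N=\sum_s R\,(m^s_i)_i$ lies in $\prod_I M_i$. Consider the family of submodules $\prod_{X}M_i$, embedded via extension by zero, together with the restriction maps. The product $\prod_I M_i$ is the direct limit, over $X$ of size $\le\rho$, of the pure submodules $P_X$ consisting of tuples that are ``determined on $X$'' in a suitable sense. Concretely, define $P_X$ as the image of a map $\prod_XM_i\to\prod_IM_i$ sending $(a_i)$ to a tuple whose coordinates are copied along an equivalence relation on $I$ with at most $\rho$ classes. This is a split (hence pure) monomorphism with retraction the restriction to representatives.

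Choose the equivalence relation so that $i\sim i'$ exactly when the finite data $(m^s_i)_s$ and $(m^s_{i'})_s$ agree up to identification. The number of classes can be large, so instead use the relation determined by the $\rho$-many possible behaviors of finitely many equations over $R$. This refinement is the \textbf{main obstacle}, and I would handle it with a Löwenheim--Skolem argument.

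\textbf{Step 3: Löwenheim--Skolem version of Step 2.} The element $(m^s_i)_i$ lives in $\prod M_i$, and $\Tor_n(T,-)$ of a product is a pure-invariant; since $\mathcal A_n(T)$ is closed under pure submodules and Tor maps along pure monos are injective (Lemma~\ref{lem:character-Tor}), it suffices to show that $\prod_IM_i$ is the directed union of pure submodules, each isomorphic to a direct summand of $\prod_{X}M_i$ with $|X|\le\rho$. Then $\xi$ lives in one of them, say $P$: Tor commutes with directed unions and the map on Tor along pure embeddings is injective, so $\Tor_n(T,P)\ne0$, and $P$ a summand of $\prod_XM_i$ gives the claim. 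For the summand property I would use elementary substructures: take $\mathcal M\preceq(\prod M_i,\dots)$ of size $\rho$ in the language of $R$-modules; its pp-types are preserved, so it is pure. Realizing it as a summand of a $\rho$-indexed subproduct is the delicate point. Then the ``consequently'' and the Gorenstein flat statements follow at once from \eqref{eq:detecting-test} with $n=1$ and Lemma~\ref{lem:detecting-dimensions}; the countable case takes $\rho=\aleph_0$.
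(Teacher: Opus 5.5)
Your proposal has a genuine gap at the point the theorem is about. You never show that nonvanishing survives passage to a subproduct indexed by at most $\rho$ coordinates.

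Step~1 is correct, and so is the vanishing half of Step~2. By naturality, the image of the projected element in $F_{n-1}\otimes_R\prod_{i\in X}M_i$ vanishes for every $X$, with no finite-rank summand needed. For the nonzeroness of $\xi$, however, you only say that a submodule $K_0$ should be ``chosen so that'' it works. Both mechanisms you then offer break down.

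\emph{Copying coordinates.} The map $\prod_{i\in X}M_i\to\prod_{i\in I}M_i$ that copies coordinates along an equivalence relation is not defined for an arbitrary family. An element of $M_i$ cannot be copied into a different module $M_{i'}$. The construction only makes sense for powers $M^I$, and then only with $|M|\le\rho$ to bound the number of classes.

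\emph{Elementary substructures.} The route in Step~3 actually fails. Take $R=\mathbb Z$ and $M_i=\mathbb Z$ with $I$ uncountable. A countable elementary substructure of $\mathbb Z^I$ is a countable pure subgroup. It is free, because $\mathbb Z^I$ is $\aleph_1$-free. It has infinite rank, because elementarity transfers the infinitude of $\mathbb Z^I/p\mathbb Z^I$. So it is isomorphic to $\mathbb Z^{(\mathbb N)}$. Since $\mathbb Z^{(\mathbb N)}$ is slender, it is not a direct summand of $\mathbb Z^X$ for any countable $X$.

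The missing idea is to project rather than embed, choosing $X$ by the count of finite linear systems you allude to. In your notation, write $\xi=\sum_sk_s\otimes p_s$ with $p_s\in P=\prod_{i\in I}M_i$. Then $\xi=0$ in $K\otimes_RP$ if and only if there are $k'_1,\dots,k'_q\in K$ and a matrix $(r_{ts})$ over $R$ satisfying two conditions:
\begin{itemize}
\item[(a)] $k_s=\sum_tk'_tr_{ts}$ for all $s$;
\item[(b)] $\sum_sr_{ts}p_s=0$ for all $t$.
\end{itemize}
There are at most $\rho$ finite matrices over $R$. Condition (a) does not involve $P$. Condition (b) holds in a product if and only if it holds in every coordinate.

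Since $\xi\ne0$, each matrix $r$ satisfying (a) violates (b) at some coordinate $i_r$. Put $X=\{i_r\}$, so $|X|\le\rho$. If the projection $\xi_X$ vanished, some matrix satisfying (a) would satisfy (b) on $\prod_{i\in X}M_i$, which is impossible at $i_r\in X$. Hence $\xi_X$ is a nonzero element of $\ker\bigl(K\otimes_R\prod_{i\in X}M_i\to F_{n-1}\otimes_R\prod_{i\in X}M_i\bigr)$, and $\Tor_n^R(T,\prod_{i\in X}M_i)\ne0$.

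This is the paper's argument. The paper phrases it for a cycle in $F_n\otimes_RP$ and the finite systems $\exists\mathbf y\,(A\mathbf y=\mathbf a,\ B\mathbf y=0)$ expressing that the cycle is a boundary. Your derivation of the remaining assertions from the first one is fine.
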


\begin{proof}
Fix a free resolution $F_\bullet$ of $T$ and bases of its terms.
Set $P=\prod_iM_i$. Choose a cycle representing a nonzero class,
and write it as $z=\sum_{j=1}^m e_j\otimes a_j$, where the $e_j$
are distinct basis vectors of $F_n$ and $a_j\in P$.

For each finite subset of the basis of $F_{n+1}$, write the images
of its elements using $e_1,\ldots,e_m$ and the additional basis
vectors which occur. A preimage of $z$ involving only the basis
vectors in this subset exists
exactly when a finite system of the form
\begin{equation}\label{eq:boundary-matrix}
 \varphi(\mathbf a):\quad
 \exists\mathbf y\ (A\mathbf y=\mathbf a\ \text{and}\ B\mathbf y=0)
\end{equation}
has a solution, where $A$ and $B$ are finite matrices over $R$
and $\mathbf a$ is the column vector with entries $a_1,\ldots,a_m$.
The second block imposes zero coefficients on all
additional basis vectors. Every preimage of $z$ in
$F_{n+1}\otimes_RP$ involves only finitely many basis vectors of
$F_{n+1}$, so these systems characterize the boundaries whose
coefficients vanish outside $e_1,\ldots,e_m$.

Only the sizes and entries of $A$ and $B$ occur in
\eqref{eq:boundary-matrix}; the names of additional basis vectors
do not occur. There are at most $\rho$ distinct such systems.
Denote their set by $\Phi$, including the condition
$\mathbf a=0$ corresponding to the empty subset of the basis of $F_{n+1}$.
A fixed finite system is solvable in a direct product
if and only if it is solvable in every coordinate.

Since $z$ is not a boundary, every $\varphi\in\Phi$ fails in $P$.
For each $\varphi$, choose a coordinate $i_\varphi$ where it fails,
and put $X=\{i_\varphi:\varphi\in\Phi\}$.
Then $|X|\le\rho$. Coordinate projection sends $z$ to a cycle
$z_X$ in $F_n\otimes_R\prod_{i\in X}M_i$.
Every system in $\Phi$ still fails at its selected coordinate.
Hence $z_X$ is not a boundary. This proves the assertion, and the
criterion for closure under direct products follows immediately.
For the assertion about Gorenstein flat modules, apply this result
in degree one and use \eqref{eq:detecting-test}, since direct products
over subsets of the index set are direct summands.
\end{proof}

\begin{remark}\label{rmk:boundary-systems}
The finite systems in \eqref{eq:boundary-matrix} give a further criterion.
Let $R$ be a right coherent ring. Choose $T$ as in
\eqref{eq:detecting-test}. For each finite subset of the basis in degree
one of a free resolution of $T$, let $\Phi$ be the associated systems
\eqref{eq:boundary-matrix}. The class $\GF(R)$ is closed under
direct products if and only if, for each such subset, one member of
$\Phi$ holds for every cycle supported on that subset over every
$G\in\GF(R)$.

If, for one such subset, no member of $\Phi$ holds for every cycle
supported on that subset over every $G\in\GF(R)$, choose for every
$\varphi\in\Phi$ a Gorenstein flat module and a tuple of coefficients
of a cycle for which $\varphi$ has no solution.
The chosen tuples determine a cycle in the direct product for which
every system in $\Phi$ has no solution. Hence the direct product of the chosen modules is not
Gorenstein flat.
Conversely, a single system valid on all coordinate cycles is
valid on the corresponding tuple in the direct product. Therefore every degree-one cycle
is a boundary, and \eqref{eq:detecting-test} applies.
\end{remark}

\begin{proposition}\label{prop:finite-product-dimensions}
Let $R$ be a right coherent ring. If every direct product of at most $\rho$
Gorenstein flat $R$-modules has finite Gorenstein flat dimension,
then $\GF(R)$ is closed under arbitrary direct products.
\end{proposition}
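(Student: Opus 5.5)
Argue by contradiction. Suppose $\GF(R)$ is not closed under arbitrary direct products. By Theorem~\ref{thm:coordinate}, there is a family $(G_i)_{i\in X}$ in $\GF(R)$ with $|X|\le\rho$ and $\Tor_1^R(T,\prod_{i\in X}G_i)\ne0$. The plan has two steps. First, use the argument of \cite[Proposition 2.4]{CI} to turn the hypothesis into a \emph{uniform} bound on Gorenstein flat dimensions. Second, reduce that bound to zero by dimension shifting, using the fact that syzygies of Gorenstein flat modules are Gorenstein flat.

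\emph{Step 1: uniform bound.} Suppose no uniform bound exists. Then for every $d\ge0$ there is a family $\mathcal F_d$ of at most $\rho$ Gorenstein flat modules whose product $P_d$ has $\Gfd_RP_d>d$. By Lemma~\ref{lem:detecting-dimensions}, this means $\Tor_{d+1}^R(T,P_d)\ne0$. Let $\mathcal F$ be the disjoint union of the families $\mathcal F_d$, which has at most $\rho\cdot\aleph_0=\rho$ members, and let $P$ be its product. Each $P_d$ is a direct summand of $P$, so $\Tor_{d+1}^R(T,P)\ne0$ for every $d$, and Lemma~\ref{lem:detecting-dimensions} gives $\Gfd_RP=\infty$. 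This contradicts the hypothesis. Hence there is $d$ such that every product of at most $\rho$ Gorenstein flat modules has $\Gfd\le d$, that is, $\Tor_{n}^R(T,-)$ vanishes on such products for all $n>d$.

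\emph{Step 2: shift down to zero.} This is the key step. Take the minimal $d$ from Step~1 and suppose $d\ge1$. Choose a product $P=\prod_{i\in X}G_i$ with $|X|\le\rho$ and $\Tor_d^R(T,P)\ne0$. For each $i$, fix an exact sequence $0\to G_i\to F_i\to G_i'\to0$ with $F_i$ flat and $G_i'\in\GF(R)$. Such a sequence exists by taking a piece of the complete flat resolution of $G_i$; the cokernel $G_i'$ is a cycle of the same complex, hence Gorenstein flat. Taking products gives an exact sequence $0\to P\to\prod F_i\to\prod G_i'\to0$. Since $R$ is right coherent, $\prod F_i$ is flat by Chase's theorem \cite[Theorem 2.1]{Chase}. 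For $d\ge1$ the long exact Tor sequence then gives
\[
\Tor_{d+1}^R\Bigl(T,\prod G_i'\Bigr)\cong\Tor_d^R(T,P)\ne0 .
\]
But $\prod G_i'$ is a product of at most $\rho$ Gorenstein flat modules, so Step~1 forces $\Tor_{d+1}^R(T,\prod G_i')=0$, a contradiction. Hence $d=0$, so all products of at most $\rho$ Gorenstein flat modules are Gorenstein flat by \eqref{eq:detecting-test}. Theorem~\ref{thm:coordinate} then yields closure under arbitrary direct products.

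The main obstacle is Step~1: finite dimension of each product does not a priori give a uniform bound. The gluing trick handles this, and it relies on $\rho\cdot\aleph_0=\rho$ so that the combined family still has at most $\rho$ factors. Step~2 needs right coherence only to make $\prod F_i$ flat, and it needs a choice of cosyzygy sequences for the $G_i$, which the axiom of choice supplies.
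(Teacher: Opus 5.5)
Your proposal is correct and follows essentially the same route as the paper: first glue countably many families into one product with at most $\rho$ factors to get a uniform bound (as in \cite[Proposition 2.4]{CI}), then use dimension shifting through flat products (via Chase's theorem) to push the least bound down to zero, and finish with Theorem~\ref{thm:coordinate}. The only difference is cosmetic: your opening contradiction hypothesis is never used and can be dropped, since Steps~1 and~2 already prove the statement directly.
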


\begin{proof}
First, the finite dimensions have a uniform bound, as in
\cite[Proposition 2.4]{CI}. Otherwise,
for each $n<\omega$, choose such a direct product $P_n$ with
$\Gfd_RP_n>n$. After reindexing, $\prod_{n<\omega}P_n$ still has
at most $\rho$ Gorenstein flat factors. Its dimension is finite,
say $d$. Each $P_n$ is a direct summand, whence
$\Gfd_RP_n\le d$, a contradiction for $n>d$.

We now adapt the dimension-shifting argument in the proof of
\cite[Theorem 5.7]{CFH}. Let $d$ be the least uniform bound.
Suppose that $d>0$.
For an arbitrary family $(G_i)$ in $\GF(R)$ indexed by a set
of cardinality at most $\rho$, choose exact sequences
$0\to G_i\to F_i\to G_i'\to0$ from complete flat resolutions,
with $F_i$ flat and $G_i'$ Gorenstein flat.
Put $P=\prod_iG_i$ and $P'=\prod_iG_i'$.
Taking direct products gives $0\to P\to\prod_iF_i\to P'\to0$.
Since $R$ is right coherent, $\prod_iF_i$ is flat by
\cite[Theorem 2.1]{Chase}. We obtain
$\Tor_d^R(T,P)\cong\Tor_{d+1}^R(T,P')=0$
by dimension shifting and Lemma~\ref{lem:detecting-dimensions}.
Thus $\Gfd_RP\le d-1$, contradicting minimality of $d$.
Consequently $d=0$, and
$\GF(R)$ is closed under arbitrary direct products by
Theorem~\ref{thm:coordinate}.
\end{proof}

\subsection{Relative Mittag--Leffler conditions and character modules}
\label{sec:ML-criteria}

We retain $T$ from \eqref{eq:detecting-test} and express
the Tor criterion as a relative Mittag--Leffler condition
on the first syzygy of $T$.
For a class $\mathcal Q$ of $R$-modules, an $R^{\op}$-module $M$ is
$\mathcal Q$-Mittag--Leffler if the canonical map
$\tau_M:M\otimes_R\prod_iQ_i\to\prod_i(M\otimes_RQ_i)$ is
injective for every family in $\mathcal Q$;
see \cite[Definition 1.1]{AHH}.
This is the relative version of the condition introduced in
\cite[Seconde partie, Section 2]{RG}; see also \cite{HT,CI}.
Fix a free presentation
\begin{equation}\label{eq:detecting-presentation}
 0\longrightarrow L\xrightarrow{u}F_T\longrightarrow T
 \longrightarrow0.
\end{equation}

\begin{proposition}\label{prop:detecting-syzygy}
Let $R$ be a right coherent ring. For a family $(G_i)$ in $\GF(R)$, put
$P=\prod_iG_i$. Then
$\ker\tau_L=\ker(u\otimes_RP)\cong\Tor_1^R(T,P)$.
Consequently, $\GF(R)$ is closed under direct products if and only if
$L$ is $\GF(R)$-Mittag--Leffler.
\end{proposition}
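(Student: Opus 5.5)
The plan is to compare the two natural maps out of $L\otimes_RP$ using the presentation \eqref{eq:detecting-presentation} and the commutative square
$$
 \begin{array}{ccc}
 L\otimes_RP & \xrightarrow{u\otimes P} & F_T\otimes_RP\\
 \downarrow{\scriptstyle\tau_L} & & \downarrow{\scriptstyle\tau_{F_T}}\\
 \prod_i(L\otimes_RG_i) & \xrightarrow{\prod_i(u\otimes G_i)} & \prod_i(F_T\otimes_RG_i).
 \end{array}
$$
Applying $-\otimes_RP$ to \eqref{eq:detecting-presentation} and using that $F_T$ is free, hence flat, the long exact Tor sequence gives $\ker(u\otimes_RP)\cong\Tor_1^R(T,P)$. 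This is the second isomorphism.

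For the first equality I will show that both $\tau_{F_T}$ and the bottom map are injective. Then commutativity forces $\ker\tau_L=\ker(u\otimes_RP)$. The bottom map is a product of the maps $u\otimes G_i$. Each has kernel $\Tor_1^R(T,G_i)$, which vanishes by \eqref{eq:detecting-test} because each $G_i$ is Gorenstein flat. A product of monomorphisms is a monomorphism.

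Injectivity of $\tau_{F_T}$ needs more care, since $F_T$ is free but possibly of infinite rank. For a finite free module $\tau$ is an isomorphism. For $F_T=R^{(X)}$, the map $\tau_{F_T}$ is the inclusion $P^{(X)}\to\prod_i G_i^{(X)}$. This is injective because an element of $P^{(X)}$ has finitely many nonzero coordinates in $X$, and each such coordinate is detected in some $G_i$. I would spell this out by writing $R^{(X)}\otimes_RP\cong P^{(X)}$ and checking that the composite into $\prod_iG_i^{(X)}$ is the obvious inclusion. This is the only step needing care, and it does not require coherence.

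The equivalence follows. If $\GF(R)$ is closed under products, then $\Tor_1^R(T,P)=0$ by \eqref{eq:detecting-test}, so $\tau_L$ is injective for every family, and $L$ is $\GF(R)$-Mittag--Leffler. Conversely, if $\tau_L$ is always injective, then $\Tor_1^R(T,P)=0$ for every product $P$ of Gorenstein flat modules, so $P\in\GF(R)$ by \eqref{eq:detecting-test}. Coherence enters only through \eqref{eq:detecting-test}.
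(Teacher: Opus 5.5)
Your proposal is correct and follows the paper's proof essentially step by step. You use the same naturality square, the injectivity of $\tau_{F_T}$ and of $\prod_i(u\otimes_RG_i)$ (the latter via \eqref{eq:detecting-test}), and the long exact Tor sequence of \eqref{eq:detecting-presentation}; your explicit check that $\tau_{F_T}$ is injective for free modules of infinite rank, and your direct derivation of the Mittag--Leffler equivalence (where the paper cites \cite[Theorem 2.1]{CI}), fill in details the paper leaves implicit.
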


\begin{proof}
The map $u\otimes_RG_i$ is injective for every $i$ by
\eqref{eq:detecting-test}. Since $F_T$ is free, $\tau_{F_T}$ is
injective. Naturality gives
$$
 \tau_{F_T}(u\otimes_RP)
 =\left(\prod_i(u\otimes_RG_i)\right)\tau_L.
$$
Since $\tau_{F_T}$ and $\prod_i(u\otimes_RG_i)$ are injective,
the two kernels agree.
The long exact Tor sequence of \eqref{eq:detecting-presentation}
identifies this kernel with $\Tor_1^R(T,P)$.
For the relative Mittag--Leffler criterion for hereditary Tor-pairs,
see \cite[Theorem 2.1]{CI}.
\end{proof}

We use the relative Mittag--Leffler criterion in
\cite[Theorem 5.1]{AHH}; see also \cite[Corollary 2.11]{HT}.
Let $M=\varinjlim_\lambda M_\lambda$ be a directed limit of
finitely presented $R^{\op}$-modules. 
Write $f_\lambda:M_\lambda\to M$ for the canonical maps
and $f_{\mu\lambda}:M_\lambda\to M_\mu$ for the transition maps.
Then $M$ is $\mathcal Q$-Mittag--Leffler if and only if, for each
$\lambda$, some $\mu\ge\lambda$ satisfies
\begin{equation}\label{eq:relative-ML-stages}
 \ker(f_\lambda\otimes_RQ)
 =\ker(f_{\mu\lambda}\otimes_RQ)
 \qquad(Q\in\mathcal Q).
\end{equation}

For the stationarity criterion, we use pure Gorenstein flat
submodules of cardinality at most $\rho$.
For a module $M$ and a subset
$X\subseteq M$, there is a pure submodule $H\subseteq M$
containing $X$ with $|H|\le\max\{|X|,|R|,\aleph_0\}$,
by \cite[Lemma 2.25(a)]{GT}.
If $R$ is right coherent and $M\in\GF(R)$, then $H\in\GF(R)$ by
Lemma~\ref{lem:character-Tor}.

Choose a set $\mathcal S$ of representatives of the isomorphism
classes of modules in $\GF(R)$ whose underlying sets have
cardinality at most $\rho$. Put
$$
 C_U=\bigoplus_{H\in\mathcal S}H^{(\rho)},
 \quad E_U=C_U^+.
$$
Then $C_U$ is Gorenstein flat, and $E_U$ is pure-injective and
Gorenstein injective. Note that taking $\rho$ copies of every
$H\in\mathcal S$ ensures that every direct sum of at most $\rho$
members of $\mathcal S$ is isomorphic to a direct summand of $C_U$.

For every $G\in\GF(R)$, the module $G^+$ is a direct summand of
a direct product of copies of $E_U$.
Indeed, the pure submodules $H_a$ of $G$
of cardinality at most $\rho$ form a directed family covering $G$,
by \cite[Lemma 2.25(a)]{GT}.
They are Gorenstein flat. The canonical epimorphism
$\bigoplus_aH_a\to G$ is pure: every map from a finitely presented
module to $G$ has finitely generated image contained in some $H_a$
and lifts to that summand. Thus its character dual splits,
and $G^+$ is a direct summand of $\prod_aH_a^+$.
Since every $H_a$ is isomorphic to a direct summand of $C_U$,
every $H_a^+$ is a direct summand of $E_U$. Taking the direct products
of these split maps proves the assertion.

Write $L=\bigcup_\lambda L_\lambda$ for its directed union of
finitely generated submodules, ordered by inclusion.
Each $L_\lambda$ lies in a
finite-rank free direct summand of $F_T$ and is finitely presented
by right coherence. Let $j_\lambda:L_\lambda\to L$ and
$j_{\mu\lambda}:L_\lambda\to L_\mu$ be the inclusions.
For an $R^{\op}$-module $J$, write
$H_{j_{\mu\lambda}}(J)=\im[\Hom_{R^{\op}}(L_\mu,J)\to
\Hom_{R^{\op}}(L_\lambda,J)]$,
$H_{j_\lambda}(J)=\im[\Hom_{R^{\op}}(L,J)\to
\Hom_{R^{\op}}(L_\lambda,J)]$, and
$H_{u j_\lambda}(J)=\im[\Hom_{R^{\op}}(F_T,J)\to
\Hom_{R^{\op}}(L_\lambda,J)]$.
The last map is induced by precomposition with $u j_\lambda$.
We use the morphism-indexed notation for $H$-subgroups from
\cite[Definition 2.1]{AHH}. They consist of the maps from
$L_\lambda$ to $J$ that extend to $L_\mu$, $L$, and $F_T$,
respectively.
In the preceding notation, the $E_U$-stationarity condition for $L$
(see \cite[Definition 3.7]{AHH}) reads as follows: for every $\lambda$,
there is $\mu\ge\lambda$ such that
$H_{j_{\nu\lambda}}(E_U)=H_{j_{\mu\lambda}}(E_U)$ for all $\nu\ge\mu$.
This definition is independent of the finitely presented direct
system used; see \cite[Theorem 4.8]{AHH}.

The relation between relative Mittag--Leffler conditions and
strict stationarity is given in \cite[Proposition 8.13(1)]{AHH}.
For the fixed modules $L$ and $E_U$, we express this relation
in terms of the restriction images defined above.

\begin{proposition}
\label{lem:stationary-intersection}
\label{prop:stationary-criterion}
For every $\lambda$, one has
$\bigcap_{\mu\ge\lambda}H_{j_{\mu\lambda}}(E_U)
=H_{j_\lambda}(E_U)=H_{u j_\lambda}(E_U)$.

The following conditions are equivalent.
\begin{enumerate}
\item The module $L$ is $\GF(R)$-Mittag--Leffler.
\item The module $L$ is $E_U$-stationary.
\item For every $\lambda$, there is $\mu\ge\lambda$ such that
$H_{j_{\mu\lambda}}(E_U)=H_{u j_\lambda}(E_U)$.
\end{enumerate}
Condition~(3) says that every map $L_\mu\to E_U$, restricted
to $L_\lambda$, extends to $F_T$. Here $\mu$ depends on $\lambda$,
but not on the map being extended.
\end{proposition}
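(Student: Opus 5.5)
The plan is to dualize everything through $E_U=C_U^+$, turning statements about restriction images into statements about kernels of tensor maps. For an $R$-module $C$ and a finitely presented $R^{\op}$-module $N$, adjunction gives $\Hom_{R^{\op}}(N,C^+)\cong(N\otimes_RC)^+$ naturally in $N$. For $\mu\ge\lambda$, $H_{j_{\mu\lambda}}(C^+)$ is the image of $(L_\mu\otimes_RC)^+\to(L_\lambda\otimes_RC)^+$. Since $(-)^+$ is exact, this image is the dual of the image of $j_{\mu\lambda}\otimes_RC$. Hence it equals the annihilator $\operatorname{Ann}(K_\mu)$ of $K_\mu=\ker(j_{\mu\lambda}\otimes_RC)$ in $(L_\lambda\otimes_RC)^+$. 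In the same way, $H_{j_\lambda}(C^+)=\operatorname{Ann}(K)$ with $K=\ker(j_\lambda\otimes_RC)$. Since $L=\varinjlim L_\mu$ and tensor products commute with direct limits, $K=\bigcup_{\mu\ge\lambda}K_\mu$, a directed union. Therefore
$$\bigcap_{\mu\ge\lambda}\operatorname{Ann}(K_\mu)=\operatorname{Ann}(K).$$
With $C=C_U$ this gives $\bigcap_{\mu}H_{j_{\mu\lambda}}(E_U)=H_{j_\lambda}(E_U)$.

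For the equality $H_{j_\lambda}(E_U)=H_{uj_\lambda}(E_U)$, it suffices that $\Hom_{R^{\op}}(F_T,E_U)\to\Hom_{R^{\op}}(L,E_U)$ is onto. This holds because its cokernel embeds in $\Ext^1_{R^{\op}}(T,E_U)$, which vanishes: $E_U$ is Gorenstein injective and $\GI(R^{\op})=T^\perp$ by \eqref{eq:detecting-test}.

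For the equivalences, first observe that the subgroups $H_{j_{\nu\lambda}}(E_U)$ decrease as $\nu$ grows, since any extension to $L_\nu$ restricts to an extension to $L_\mu$. All of them contain $H_{uj_\lambda}(E_U)$, and by the first part their intersection is $H_{uj_\lambda}(E_U)$.
\begin{enumerate}
\item[(2)$\Rightarrow$(3):] If the images are constant from $\mu$ on, that constant value equals the intersection, which is $H_{uj_\lambda}(E_U)$.
\item[(3)$\Rightarrow$(2):] If $H_{j_{\mu\lambda}}(E_U)=H_{uj_\lambda}(E_U)$, then for every $\nu\ge\mu$ the group $H_{j_{\nu\lambda}}(E_U)$ is squeezed between these two equal groups.
\end{enumerate}

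For (1)$\Leftrightarrow$(3), I will use the criterion \eqref{eq:relative-ML-stages} with $\mathcal Q=\GF(R)$ and the finitely presented system $(L_\lambda)$. Since $\mathbb Q/\mathbb Z$ is a cogenerator, subgroups of $L_\lambda\otimes_RC_U$ coincide exactly when their annihilators in the dual do. By the annihilator description above, (3) is therefore equivalent to the following: for every $\lambda$ there is $\mu\ge\lambda$ with
$$\ker(j_{\mu\lambda}\otimes_RC_U)=\ker(j_\lambda\otimes_RC_U).$$
This is \eqref{eq:relative-ML-stages} for the single module $C_U$. Because $C_U\in\GF(R)$, (1) implies (3).

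The main obstacle is the converse: passing from $C_U$ to an arbitrary $G\in\GF(R)$ with the same $\mu$. Equality of these kernels for $C_U$ passes to direct summands of $C_U$, by naturality of the kernels under the inclusion and projection. Write $G$ as the directed union of its pure submodules $H_a$ of cardinality at most $\rho$. Each $H_a$ is Gorenstein flat and isomorphic to a direct summand of $C_U$, as shown before the proposition.

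Now take $y\in\ker(j_\lambda\otimes_RG)$. Since $L_\lambda\otimes_RG=\varinjlim_a(L_\lambda\otimes_RH_a)$, the element $y$ comes from some $y_a\in L_\lambda\otimes_RH_a$. The map $L\otimes_RH_a\to L\otimes_RG$ is injective by purity of $H_a\subseteq G$, so $y_a\in\ker(j_\lambda\otimes_RH_a)$. This kernel equals $\ker(j_{\mu\lambda}\otimes_RH_a)$, so $y_a$ dies in $L_\mu\otimes_RH_a$, and hence $y$ dies in $L_\mu\otimes_RG$. The reverse inclusion $\ker(j_{\mu\lambda}\otimes_RG)\subseteq\ker(j_\lambda\otimes_RG)$ is automatic, since $j_\lambda=j_\mu j_{\mu\lambda}$. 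Thus \eqref{eq:relative-ML-stages} holds on all of $\GF(R)$, and \cite[Theorem 5.1]{AHH} yields (1).

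The final sentence of the proposition is just an unwinding of (3). $H_{j_{\mu\lambda}}(E_U)$ consists of the restrictions to $L_\lambda$ of maps $L_\mu\to E_U$, and $H_{uj_\lambda}(E_U)$ consists of the maps $L_\lambda\to E_U$ that extend to $F_T$. So (3) says every such restriction extends to $F_T$, with $\mu$ chosen depending only on $\lambda$.
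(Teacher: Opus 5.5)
Your proof is correct, and it departs from the paper's in two steps. For $\bigcap_{\mu\ge\lambda}H_{j_{\mu\lambda}}(E_U)=H_{j_\lambda}(E_U)$, the paper uses only that $E_U$ is pure-injective. It writes the extension problem $L\to E_U$ as a system of equations, notes that every finite subsystem is solved by an extension to some $L_\mu$, and applies algebraic compactness. You instead use that $E_U=C_U^+$ is a character module: you identify each restriction image with an annihilator, and the equality reduces to $\ker(j_\lambda\otimes_RC_U)=\bigcup_\mu\ker(j_{\mu\lambda}\otimes_RC_U)$, which is just the commutation of tensor products with directed unions. For (3)$\Rightarrow$(1), the paper stays on the Hom side. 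It propagates the extension property, with the same $\mu$, to direct products of copies of $E_U$ and their summands, and uses the previously shown fact that each $G^+$ with $G\in\GF(R)$ is such a summand. It then uses $G^+\in T^\perp$ and the annihilator observation to recover the kernel equality for $G$. You translate (3) once into the kernel equality for $C_U$. You then transfer it directly to an arbitrary $G$ through its directed union of small pure submodules $H_a$, using purity to pull elements of $\ker(j_\lambda\otimes_RG)$ back into some $L_\lambda\otimes_RH_a$. Your route is more elementary and self-contained: it needs no compactness, no $T^\perp$ for $G^+$, and no product/summand lemma. The paper's route shows which property of $E_U$ each step uses, and the remark after the proposition relies on this: pure-injectivity gives the intersection, and $E_U\in T^\perp$ identifies it with $H_{u j_\lambda}(E_U)$. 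Your second equality, (2)$\Leftrightarrow$(3), and (1)$\Rightarrow$(3) match the paper. One small correction: the adjunction $\Hom_{R^{\op}}(N,C^+)\cong(N\otimes_RC)^+$ holds for every $N$. You need it for $N=L$, which is generally not finitely presented, so drop the finite-presentation hypothesis in your first sentence.
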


\begin{proof}
Let $h:L_\lambda\to E_U$ belong to each image in the intersection.
Introduce variables $x_a$ with values in $E_U$, one for every
$a\in L$. Require $x_{a+b}=x_a+x_b$ for all $a,b\in L$,
$x_{ar}=x_a r$ for all $a\in L$ and $r\in R$, and
$x_a=h(a)$ for all $a\in L_\lambda$. Every finite subsystem involves finitely many
elements of $L$. They lie in some finitely generated
$L_\mu\supseteq L_\lambda$, and an extension of $h$ to $L_\mu$
solves that subsystem. Since $E_U$ is pure-injective, algebraic
compactness supplies a solution of the entire system;
see \cite[Proposition 9.2]{Stenstrom}. It defines an extension $L\to E_U$.
This proves the first equality. Since $E_U\in T^\perp$,
\eqref{eq:detecting-presentation} implies that
$\Hom_{R^{\op}}(F_T,E_U)\to\Hom_{R^{\op}}(L,E_U)$ is surjective. This proves the second
equality.

By the first assertion, the intersection of
the images is $H_{u j_\lambda}(E_U)$. The images stabilize if and
only if this intersection is attained at some stage. Thus the
conditions~(2) and~(3) are equivalent.

We use tensor--Hom adjunction. For a map $f:M\to N$
of $R^{\op}$-modules and an $R$-module $G$, the image of
$\Hom_{R^{\op}}(f,G^+)$ is the annihilator of $\ker(f\otimes_RG)$ under
$\Hom_{R^{\op}}(M,G^+)\cong(M\otimes_RG)^+$.
Indeed, a character vanishing on the kernel defines a character
on the image and extends to $N\otimes_RG$, because
$\mathbb Q/\mathbb Z$ is injective. Characters separate points
of quotient groups, so equality of these annihilators is equivalent
to equality of the kernels.

Assume condition~(1). Apply the criterion
\eqref{eq:relative-ML-stages} to $L$ with $Q=C_U$. The equal kernels give
$H_{j_{\mu\lambda}}(E_U)=H_{j_\lambda}(E_U)$, proving condition~(3).

Conversely, assume condition~(3). The extension property holds
with the same $\mu$ when $E_U$ is replaced by any direct product
of copies of $E_U$, since maps into a direct product are determined
by their coordinate maps. It also holds for every direct summand of such a direct product,
by projecting an extension to the summand. As shown above, every
$G^+$ with $G\in\GF(R)$ is a direct summand of a direct product of
copies of $E_U$. Hence the condition holds for every such $G^+$.
Since $G^+\in T^\perp$, one has
$H_{u j_\lambda}(G^+)=H_{j_\lambda}(G^+)$.
The annihilator observation gives
$\ker(j_\lambda\otimes_RG)=\ker(j_{\mu\lambda}\otimes_RG)$
for every such $G$. Thus \eqref{eq:relative-ML-stages} gives
condition~(1).
\end{proof}

\begin{remark}\label{rmk:pure-injectivity-stationarity}
In Proposition~\ref{lem:stationary-intersection}, pure-injectivity of
$E_U$ identifies the intersection of the restriction images with
$H_{j_\lambda}(E_U)$; the equality with $H_{u j_\lambda}(E_U)$ uses
$E_U\in T^\perp$. Closure of $\GF(R)$ under direct products requires the
restriction images $H_{j_{\mu\lambda}}(E_U)$ to stabilize at
$H_{u j_\lambda}(E_U)$. If $E_U$ is $\Sigma$-pure-injective,
then $L$ is $E_U$-stationary by \cite[Corollary 8.11(3)]{AHH},
and hence $\GF(R)$ is closed under direct products. Thus, for the ring
in Theorem~\ref{thm:main-counterexample}, $E_U$ is not
$\Sigma$-pure-injective.
\end{remark}

\subsection{Proof of Theorem~\ref{thm:criteria}}\label{sec:biduals}

We now establish the criteria involving character biduals.

\begin{proposition}\label{prop:canonical-bidual}
Let $R$ be a right coherent ring. Then the following conditions are equivalent.
\begin{enumerate}
\item The class $\GF(R)$ is closed under direct products.
\item The module $C_U^\rho$ is Gorenstein flat.
\item The module $C_U^{++}$ is Gorenstein flat.
\item $\Gfd_RC_U^{++}<\infty$.
\item The module $E_U^{++}$ is Gorenstein injective.
\item $\Gid_{R^{\op}}E_U^{++}<\infty$.
\end{enumerate}
\end{proposition}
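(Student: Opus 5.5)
We prove the cycle of implications (1)$\Rightarrow$(3)$\Rightarrow$(4)$\Rightarrow$(2)$\Rightarrow$(1), and attach (5) and (6) through Lemma~\ref{lem:detecting-dimensions}.

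\emph{Attaching (5) and (6).} Lemma~\ref{lem:detecting-dimensions} gives $\Gfd_RC_U^{++}=\Gid_{R^{\op}}(C_U^{++})^+=\Gid_{R^{\op}}E_U^{++}$, since $(C_U^{++})^+=(C_U^+)^{++}=E_U^{++}$. Hence (3)$\Leftrightarrow$(5) is the case $d=0$ of that equality, and (4)$\Leftrightarrow$(6) is the finite case.

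\emph{(1)$\Rightarrow$(3).} Lemma~\ref{lem:product-into-bidual}, applied to the one-term family, gives a pure monomorphism $C_U\to C_U^{++}$, but we need the reverse containment. Note that $C_U^{++}=(E_U)^+$ and $E_U=C_U^+$. Write $C_U=\varinjlim$ of its finitely generated submodules, or more simply use the standard fact that $M^{++}$ is a pure quotient, in fact a direct limit, of direct products of copies of $M$. Concretely, $C_U^{++}$ is pure-injective and $C_U\to C_U^{++}$ is pure. By the ultraproduct description of the pure-injective hull (Prest), $C_U^{++}$ is a pure submodule of an ultrapower-type quotient of $C_U^{\kappa}$. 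Under (1), $C_U^\kappa\in\GF(R)$. Ultrapowers are direct limits of products, and $\GF(R)$ is closed under direct limits and pure submodules by Lemma~\ref{lem:character-Tor}. Hence $C_U^{++}$ is Gorenstein flat. If a cleaner route is preferred, we can instead invoke that under (1) the class $\GF(R)$ is definable (closed under products, direct limits and pure submodules), and definable classes are closed under biduals \cite[Corollary 4.6]{MP}. We use the latter.

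\emph{(3)$\Rightarrow$(4)} is trivial.

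\emph{(4)$\Rightarrow$(2).} Lemma~\ref{lem:product-into-bidual} gives a pure monomorphism $C_U^\rho\to(C_U^{(\rho)})^{++}$. Since $C_U^{(\rho)}\cong C_U$ (as $\rho\cdot\rho=\rho$ copies of each $H$), the right-hand side is $C_U^{++}$. Thus $\Gfd_RC_U^\rho\le\Gfd_RC_U^{++}<\infty$, by closure under pure submodules in Lemma~\ref{lem:detecting-dimensions}. This gives finiteness, not membership, so we close the loop through Proposition~\ref{prop:finite-product-dimensions}.

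\emph{Reduction to $C_U^\rho$.} Let $(G_i)_{i\in X}$ be a family in $\GF(R)$ with $|X|\le\rho$, and put $P=\prod_iG_i$. We reduce to $P$ being a pure submodule of a product of $\le\rho$ copies of $C_U$ up to summands. Replacing each $G_i$ by $\bigoplus_a H_{a}$ with pure epimorphism onto $G_i$ does not help directly. Instead we use Theorem~\ref{thm:coordinate}'s argument: nonvanishing of $\Tor_{n}^R(T,P)$ is witnessed by a single cycle $z$ with finitely many coefficients $a_1,\dots,a_m\in P$. For each coordinate $i$, the coefficients $a_{1,i},\dots,a_{m,i}$ lie in a pure submodule $H_i\subseteq G_i$ of size $\le\rho$, and $H_i\in\GF(R)$. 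The cycle lives in $\prod_iH_i$, and it remains a non-boundary there because $\prod_iH_i\to\prod_iG_i$ is pure and Lemma~\ref{lem:character-Tor} applies. Each $H_i$ is a summand of $C_U$, so $\prod_iH_i$ is a summand of $C_U^{X}$, which is a summand of $C_U^\rho$. Hence, under (4), every such $P$ has $\Gfd\le\Gfd_RC_U^\rho<\infty$, and Proposition~\ref{prop:finite-product-dimensions} gives (1). The same argument, with (2) assumed, gives (1) directly, and (1)$\Rightarrow$(2) is trivial.

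The main obstacle is the purity claim for $\prod H_i\to\prod G_i$. A product of pure monomorphisms is pure, as one checks via finite systems of equations coordinatewise, so this should go through.
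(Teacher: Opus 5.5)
Your proof is correct and uses the same ingredients as the paper's: the pure Gorenstein flat submodules $H_i$ of cardinality at most $\rho$, a pure embedding into $C_U^{++}$, Proposition~\ref{prop:finite-product-dimensions}, definability for (1)$\Rightarrow$(3), and Lemma~\ref{lem:detecting-dimensions} for (5) and (6). The only difference is that you embed $C_U^\rho$ purely into $(C_U^{(\rho)})^{++}\cong C_U^{++}$ and use that $\prod_iH_i$ is a summand of $C_U^\rho$, whereas the paper embeds $\prod_iH_i$ directly into $D^{++}$ with $D=\bigoplus_iH_i$ a summand of $C_U$; the purity of $\prod_iH_i\to\prod_iG_i$ you flag as the main obstacle is not needed, since the lifted element is a cycle because $F_{n-1}$ is free, and it cannot be a boundary because its image in $F_\bullet\otimes_R\prod_iG_i$ is not one.
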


\begin{proof}
We prove $(4)\Rightarrow(1)$. Choose $d<\infty$ with
$\Gfd_RC_U^{++}\le d$. We first show that every direct product of
Gorenstein flat modules has Gorenstein flat dimension at most $d$.
Otherwise, there is a nonzero
$\Tor_{d+1}^R(T,\prod_{i\in X}G_i)$ with $|X|\le\rho$,
by Lemma~\ref{lem:detecting-dimensions} and
Theorem~\ref{thm:coordinate}.
Choose a free resolution of $T$ and a cycle representing a nonzero
class in its tensor product with $\prod_{i\in X}G_i$.
For each $i\in X$, choose a pure Gorenstein flat submodule
$H_i\subseteq G_i$ of cardinality at most $\rho$ containing the
finitely many coefficients in that coordinate, by
\cite[Lemma 2.25(a)]{GT} and Lemma~\ref{lem:character-Tor}.
Since the terms of the resolution are free, the inclusion
$\prod_{i\in X}H_i\subseteq\prod_{i\in X}G_i$ induces a
monomorphism in each degree. Hence the same expression defines a
cycle after tensoring the resolution with $\prod_{i\in X}H_i$.
It is not a boundary, since its image in the resolution tensored with
$\prod_{i\in X}G_i$ is not a boundary.

Since $|X|\le\rho$, $D=\bigoplus_{i\in X}H_i$ is isomorphic
to a direct summand of $C_U$.
The module $\prod_{i\in X}H_i$ embeds
purely into $D^{++}$ by Lemma~\ref{lem:product-into-bidual},
and hence into $C_U^{++}$.
Thus $\Gfd_R(\prod_{i\in X}H_i)\le d$ by
Lemma~\ref{lem:detecting-dimensions}, a contradiction.
This proves that finite Gorenstein flat dimension of $C_U^{++}$ bounds the
Gorenstein flat dimensions of all direct products of modules in $\GF(R)$.
These direct products are Gorenstein flat by
Proposition~\ref{prop:finite-product-dimensions}.

For $(2)\Rightarrow(1)$, suppose that $C_U^\rho$ is Gorenstein flat.
If $\GF(R)$ were not closed under direct products, the coordinate
projection and the choice of pure submodules used for
$(4)\Rightarrow(1)$, applied in degree one, would give a direct product $\prod_{i\in X}H_i$ with
$|X|\le\rho$ and $\Tor_1^R(T,\prod_{i\in X}H_i)\ne0$.
Each $H_i$ is isomorphic to a direct summand of $C_U$, so
$\prod_{i\in X}H_i$ is isomorphic to a direct summand of $C_U^X$,
and hence of $C_U^\rho$, a contradiction.

Assume condition~(1). Then $C_U^\rho\in\GF(R)$, proving
condition~(2).
Together with the already established closure under pure
submodules and direct limits, closure under direct products makes
$\GF(R)$ definable.
Definable classes are closed under character biduals
(see \cite[Corollary 4.6]{MP}). Hence condition~(3) holds, and it implies condition~(4).
Finally, $(C_U^{++})^+=E_U^{++}$, so
$(3)\Longleftrightarrow(5)$ and $(4)\Longleftrightarrow(6)$
by Lemma~\ref{lem:detecting-dimensions}.
\end{proof}



We end this section with the proof of prove Theorem~\ref{thm:criteria} in the introduction.
\begin{proof}[Proof of Theorem~\ref{thm:criteria}]
Conditions~(1) and~(2) are equivalent by
Proposition~\ref{prop:finite-product-dimensions}. Their equivalence
with conditions~(3) and~(4) follows from
Proposition~\ref{prop:detecting-syzygy} and
Proposition~\ref{prop:stationary-criterion}, and their equivalence
with conditions~(5) and~(6) follows from
Proposition~\ref{prop:canonical-bidual}. Since $\GF(R)$ is closed
under pure submodules and direct limits, we obtain condition~(7)
from the definition of a definable class (see \cite{Prest,SS}).
Conditions~(1) and~(8) are equivalent by
\cite[Corollary 3.5(c)]{RS}.
\end{proof}




\section{Abstract elementary classes and intersections}
\label{sec:aec}

We now relate direct products to intersections by taking
inverse images under epimorphisms from free modules.
Throughout this section, $R$ is a right coherent ring and
$\rho=|R|+\aleph_0$.
We use the strong submodule relation introduced in
Section~\ref{sec:introduction} (see \cite[Definition 1.2]{BET}).

An \emph{abstract elementary class} of modules is a class $\mathcal K$
together with a strong submodule relation $\prec$ satisfying the axioms
in \cite[Definition 1.1]{BET}.
The least infinite cardinal $\kappa$, at least the cardinality of the
language, such that every subset $X$ of $N\in\mathcal K$ is contained
in some $M\prec N$ with $|M|\leq |X|+\kappa$ is called its
\emph{L\"owenheim--Skolem number}.

An abstract elementary class is said to \emph{admit intersections} if every intersection
of strong submodules of a fixed member is strong in that member
(see \cite[Definition 10.29(d)]{GT}). It is said to have \emph{finite character} if, for $M\subseteq N$
in the class, the inclusion is strong whenever every finite subset $X$
of $M$ admits strong embeddings $f_X:M\longrightarrow D_X$ and
$g_X:N\longrightarrow D_X$ into a common member with
$f_X|_X=g_X|_X$ (see \cite[Definition 2.8]{ST}).

By \eqref{eq:detecting-test}, the Tor--Ext isomorphism in
Lemma~\ref{lem:character-Tor}, and dimension shifting,
one has $\GF(R)={}^\perp\{T^+\}={}^{\perp_\infty}\{T^+\}$.
Here ${}^{\perp_\infty}\{T^+\}$ denotes the class of $R$-modules $M$
such that $\Ext_R^n(M,T^+)=0$ for every $n\ge1$, following
\cite[Section 10.3]{GT}.
Since $T^+$ is pure-injective, $(\GF(R),\prec)$ is an abstract
elementary class with L\"owenheim--Skolem number at most $\rho$,
by \cite[Theorem 10.28]{GT}. It has finite character by
\cite[Theorem 2.11]{ST}.
Direct sums give joint embedding, and disjoint amalgamation follows
from \cite[Lemma 2.1 and Remark 2.2]{BET}.
If $(\GF(R),\prec)$ admits intersections, then $\GF(R)$ is closed under
arbitrary direct products, by \cite[Lemma 10.30(c)]{GT}.

For the ring $R$ and the modules $B$ and $I$ in
Theorem~\ref{thm:main-counterexample}, we now construct a
decreasing sequence of strong submodules of a free module.
Here $\rho=|k|+\aleph_0$.
Put $M=B^{\mathbb N}$.
For $n\in\mathbb N_0$, let $M_n\subseteq M$ consist of the tuples whose
first $n$ coordinates are zero. Let $F$ be the free module with one
basis vector $[m]$ for each $m\in M$, and let
$\pi:F\longrightarrow M$ send $[m]$ to $m$.
Set $N_n=\pi^{-1}(M_n)$.

\begin{corollary}
\label{cor:countable-intersections}
The modules $N_n$ form a decreasing sequence with $N_n\prec F$
for every $n$. Its intersection $N=\bigcap_nN_n$ has infinite Gorenstein
flat dimension. Therefore $(\GF(R),\prec)$ has finite character
and L\"owenheim--Skolem number $\rho=|k|+\aleph_0$, but does not admit intersections.
\end{corollary}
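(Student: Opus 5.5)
Decreasing is clear, since $M_{n+1}\subseteq M_n$. For $N_n\prec F$ we need $N_n\in\GF(R)$ and $F/N_n\in\GF(R)$. Since $\pi$ is surjective, $F/N_n\cong M/M_n\cong B^n$, a finite direct sum of copies of the PGF module $B$, hence Gorenstein flat. The short exact sequence $0\to N_n\to F\to B^n\to0$ has $F$ free and $B^n$ Gorenstein flat. Because $\GF(R)$ is resolving (closed under kernels of epimorphisms between its members), $N_n\in\GF(R)$, so $N_n\prec F$.

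For the intersection, note that $\bigcap_nM_n=0$: a tuple lying in every $M_n$ has all coordinates zero. Hence $N=\bigcap_n\pi^{-1}(M_n)=\pi^{-1}(0)=\ker\pi$, and we get an exact sequence $0\to N\to F\to M\to0$ with $F$ free. Dimension shifting with the injective $R^{\op}$-module $I=\D(R)$ gives $\Tor_{n}^R(I,N)\cong\Tor_{n+1}^R(I,M)$ for $n\ge1$. By Theorem~\ref{thm:main-counterexample}, the right side is nonzero for every $n\ge1$. The Tor vanishing recalled in Subsection~\ref{sec:gorenstein-modules} then gives $\Gfd_RN=\infty$. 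In particular $N\notin\GF(R)$, so $N$ is not a strong submodule of $F$ even though every $N_n$ is, and $(\GF(R),\prec)$ does not admit intersections.

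Finite character was already observed before the statement, via \cite[Theorem 2.11]{ST}, as was the bound $\rho$ on the L\"owenheim--Skolem number via \cite[Theorem 10.28]{GT}. Here $\rho=|R|+\aleph_0=|k|+\aleph_0$ by Lemma~\ref{lem:explicit-ring}.

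The only point needing care is equality of the L\"owenheim--Skolem number with $\rho$, not merely the bound. The definition requires the number to be at least the cardinality of the language, and the language of $R$-modules has one unary function symbol per ring element. So it is at least $|R|+\aleph_0=\rho$, which gives equality. I expect the main obstacle to be making this lower-bound convention explicit rather than any homological step; everything else reduces directly to Theorem~\ref{thm:main-counterexample} and the resolving property of $\GF(R)$.
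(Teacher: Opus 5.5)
Your proof is correct and follows the paper's argument for $N_n\prec F$ (via $F/N_n\cong M/M_n\cong B^n$ and the resolving property of $\GF(R)$). It also matches the paper for $N=\ker\pi$ and $\Gfd_RN=\infty$ (dimension shifting against $I=\D(R)$), and for the cited finite-character and upper-bound results. The one difference is the lower bound $\rho$ on the L\"owenheim--Skolem number. You read it off from the convention, built into the paper's definition, that this number is at least the cardinality of the language, which has a scalar symbol for each $r\in R$. The paper instead applies the L\"owenheim--Skolem property to $\{1\}\subseteq R\in\GF(R)$ and notes that the only submodule of $R$ containing $1$ is $R$, which forces $|R|\le\kappa$. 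Both arguments are valid under the stated definition; the paper's has the advantage of not depending on that convention.
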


\begin{proof}
Since $F/N_n\cong M/M_n\cong B^n$, closure under finite direct products and the
resolving property imply $N_n\prec F$. Moreover,
$N=\ker\pi$. For every $q\geq1$, the exact sequence
$0\longrightarrow N\longrightarrow F\longrightarrow M\longrightarrow0$
gives $\Tor_q^R(I,N)\cong\Tor_{q+1}^R(I,M)\ne0$.
Thus $\Gfd_RN=\infty$. Finite character and the upper bound $\rho$
for the L\"owenheim--Skolem number follow from
\cite[Theorem 2.11]{ST} and \cite[Theorem 10.28]{GT}, respectively.
For the lower bound, apply the L\"owenheim--Skolem property to
$\{1\}\subseteq R\in\GF(R)$. Every submodule of $R$ containing
$1$ equals $R$, so the L\"owenheim--Skolem number is at least
$|R|=\rho$.
\end{proof}

\begin{remark}\label{rmk:known-intersection-examples}
An example of a class of roots of $\Ext$ which does not admit
intersections is given in \cite[Example 2.11]{BET}.
A class of projective modules that is closed under direct products but
whose associated abstract elementary class does not admit
intersections is given in \cite[Example 10.31]{GT}.
For the ring $R$ in \eqref{eq:main-ring}, failure of intersections in
$(\GF(R),\prec)$ is witnessed by a decreasing countable sequence of
strong submodules of a free module whose intersection has infinite
Gorenstein flat dimension, by Corollary~\ref{cor:countable-intersections}.
\end{remark}

We next apply the same inverse-image construction to a family
indexed by a limit ordinal.
Let $R$ be a right coherent ring, let $\lambda$ be a limit ordinal,
and let $(G_\beta)_{\beta<\lambda}$ be Gorenstein flat $R$-modules. Put $P=\prod_{\beta<\lambda}G_\beta$.
For $\alpha<\lambda$, let $P_\alpha\subseteq P$ consist of the tuples
whose coordinates below $\alpha$ are zero. Choose an epimorphism from a free module
$\pi:F\longrightarrow P$, and put $L_\alpha=\pi^{-1}(P_\alpha)$.
Then $\bigcap_{\alpha<\lambda}L_\alpha=\ker\pi$ and
$F/L_\alpha\cong\prod_{\beta<\alpha}G_\beta$.

\begin{proposition}
\label{prop:tail-equivalence}
\label{prop:large-cofinality}
The following conditions are equivalent.
\begin{enumerate}
\item The class $\GF(R)$ is closed under arbitrary direct products.
\item For every limit ordinal $\lambda$, every family
$(G_\beta)_{\beta<\lambda}$ in $\GF(R)$, and every epimorphism
$\pi:F\to P$ from a free module, $L_\alpha\prec F$ for all
$\alpha<\lambda$ implies
$\bigcap_{\alpha<\lambda}L_\alpha\prec F$.
\item The implication in condition~(2) holds whenever
$\lambda$ is an infinite cardinal with $\lambda\leq\rho$.
\end{enumerate}

In the notation above, if $\operatorname{cf}(\lambda)>\rho$ and
$L_\alpha\prec F$ for every $\alpha<\lambda$, then
$\bigcap_{\alpha<\lambda}L_\alpha\prec F$.
\end{proposition}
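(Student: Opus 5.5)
The plan is to translate the strong-submodule conditions into Gorenstein flatness of quotients and kernels. Since $F/L_\alpha\cong\prod_{\beta<\alpha}G_\beta$ and $\bigcap_\alpha L_\alpha=\ker\pi$, the relation $L_\alpha\prec F$ holds exactly when $\prod_{\beta<\alpha}G_\beta\in\GF(R)$; note that $L_\alpha\in\GF(R)$ then follows automatically, because $\GF(R)$ is resolving and $F$ is free. Likewise $\ker\pi\prec F$ means $\ker\pi\in\GF(R)$ and $F/\ker\pi\cong P\in\GF(R)$. The second of these implies the first by the resolving property. Conversely, if $\ker\pi\in\GF(R)$, then $P$ has Gorenstein flat dimension at most one, and we will need to push it down to zero. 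So condition~(2) will amount to: if all proper initial products $\prod_{\beta<\alpha}G_\beta$ are Gorenstein flat, then so is $P$.

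For $(1)\Rightarrow(2)$, closure under products gives $P\in\GF(R)$. Then $\ker\pi\in\GF(R)$ by the resolving property, and $F/\ker\pi\cong P$, so $\ker\pi\prec F$. The implication $(2)\Rightarrow(3)$ is trivial.

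For $(3)\Rightarrow(1)$, I would use Theorem~\ref{thm:coordinate}, which reduces the problem to showing that every product of at most $\rho$ Gorenstein flat modules is Gorenstein flat. I would argue by transfinite induction on the infinite cardinal $\kappa\le\rho$ that products of $\kappa$ Gorenstein flat modules are Gorenstein flat. Finite products are handled by closure under finite direct sums. Index such a family by $\kappa$ itself. For every $\alpha<\kappa$ we have $|\alpha|<\kappa$, so the induction hypothesis makes $\prod_{\beta<\alpha}G_\beta$ Gorenstein flat, which gives $L_\alpha\prec F$. Condition~(3) then gives $\ker\pi\prec F$, so $P\cong F/\ker\pi\in\GF(R)$. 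This covers all cardinalities up to $\rho$, and Theorem~\ref{thm:coordinate} then yields (1).

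The main obstacle is the final cofinality claim. Assume $\operatorname{cf}(\lambda)>\rho$ and that every $\prod_{\beta<\alpha}G_\beta$ is Gorenstein flat. If $P$ were not Gorenstein flat, Theorem~\ref{thm:coordinate} applied in degree one, together with \eqref{eq:detecting-test}, would give a set $X\subseteq\lambda$ with $|X|\le\rho$ and $\Tor_1^R(T,\prod_{i\in X}G_i)\ne0$. Because $\operatorname{cf}(\lambda)>\rho$, the set $X$ is bounded by some $\alpha<\lambda$. Then $\prod_{i\in X}G_i$ is a direct summand of $\prod_{\beta<\alpha}G_\beta$, which is Gorenstein flat, and this is a contradiction. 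Hence $P\in\GF(R)$, so $\ker\pi\in\GF(R)$ and $\ker\pi\prec F$. The delicate point is making sure the Tor-detection in Theorem~\ref{thm:coordinate} really yields a set of size at most $\rho$ that does not depend on $\lambda$; the proof of that theorem provides exactly this.
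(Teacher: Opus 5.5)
Your proposal is correct and is essentially the paper's proof. Your induction on infinite cardinals $\kappa\le\rho$ is the paper's least-counterexample cardinal argument rephrased, Theorem~\ref{thm:coordinate} is used in the same way, and the cofinality claim is handled identically by bounding a set $X$ with $|X|\le\rho$ below some $\alpha<\lambda$. The aside about pushing $\Gfd_R P\le 1$ down to zero is unnecessary, since $\ker\pi\prec F$ already includes $F/\ker\pi\cong P\in\GF(R)$ by definition, as your actual argument uses.
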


\begin{proof}
If $\GF(R)$ is closed under direct products, then $F/\ker\pi\cong P$
is Gorenstein flat. Thus $\ker\pi\prec F$, proving condition~(2).
Condition~(2) implies condition~(3).

For $(3)\Rightarrow(1)$, we adapt the argument in the proof of
$(1)\Rightarrow(2)$ in \cite[Lemma 2.8]{BET}.
Suppose that $\GF(R)$ is not closed under direct products.
There is a least cardinal
$\lambda\leq\rho$ such that a direct product of $\lambda$ Gorenstein flat
modules is not Gorenstein flat, by Theorem~\ref{thm:coordinate}. Since finite direct products are Gorenstein flat, $\lambda$
is infinite. Choose such a direct product and an epimorphism from a free module as above.
Minimality gives $F/L_\alpha\in\GF(R)$ for every $\alpha<\lambda$.
Then $L_\alpha\prec F$ by the resolving property.
Condition~(3) would give $\ker\pi\prec F$, and hence
$P\in\GF(R)$, a contradiction.

For the final assertion, assume that $\operatorname{cf}(\lambda)>\rho$
and $L_\alpha\prec F$ for every $\alpha<\lambda$.
Each direct product $\prod_{\beta<\alpha}G_\beta$, with $\alpha<\lambda$,
belongs to $\GF(R)$. Every subset $X\subseteq\lambda$
of cardinality at most $\rho$ is bounded in $\lambda$. Thus
$\prod_{\beta\in X}G_\beta$ is a direct summand of one of these
direct products and is Gorenstein flat.
We have $P\in\GF(R)$ by Theorem~\ref{thm:coordinate},
and hence $\ker\pi\prec F$.
\end{proof}

\begin{remark}\label{rmk:cofinality-restriction}
For the ring $R$ in \eqref{eq:main-ring}, condition~(2) of
Proposition~\ref{prop:tail-equivalence} fails at $\lambda=\omega$,
by Corollary~\ref{cor:countable-intersections}.
The implication in condition~(2) holds whenever
$\operatorname{cf}(\lambda)>\rho=|k|+\aleph_0$,
by the final assertion of Proposition~\ref{prop:large-cofinality}. Hence the cofinality restriction
cannot be omitted. If instead every $L_\alpha$ is pure in $F$, then
each direct product $\prod_{\beta<\alpha}G_\beta$, with $\alpha<\lambda$,
is flat and every $G_\beta$ is flat.
Thus $P$ is flat by Chase's theorem (see \cite{Chase}),
and $\ker\pi$ is pure in $F$.
The argument using Chase's theorem concerns the submodules
$L_\alpha=\pi^{-1}(P_\alpha)$ and does not assert closure under arbitrary
intersections of pure submodules.
\end{remark}

\section*{Statement on AI usage}

The authors acknowledge the use of ChatGPT during the preparation
of this manuscript. The overall
strategy of the paper, including the proof of nonvanishing of Tor
for a countable direct power, and the criteria for closure of
Gorenstein flat modules under direct products, was
developed by the authors. ChatGPT assisted with checking the block-matrix calculation
establishing a contracting homotopy after tensoring with
$\D(\Lambda_j)$ in the proof of Lemma~\ref{lem:finite-complete}.
It was also used to revise the English text.
The AI-assisted calculation was independently verified.
The authors take full responsibility for the content of the article.

\end{document}